\numberwithin{equation}{section}
\newtheorem{teo}{Theorem}[section]
\newtheorem{lem}[teo]{Lemma}
\theoremstyle{definition}
\numberwithin{equation}{section}
\def\a{\alpha}
\def\b{\beta}
\def\l{\lambda }
\def\o{\omega}
\def\R{\mathbb{R}}
\def\N{\mathbb{N}}
\def\d{\delta}
\def\e{\varepsilon}
\def\t{\theta}
\def\f{\varphi}
\def\s{\sigma}
\def\mes{\operatorname{mes}}
\def\mes{\operatorname{mes}}
\begin{document}


\title[Rearrangement estimates and limiting embeddings]{Rearrangement estimates and limiting embeddings for anisotropic Besov spaces }

\author[V.I. Kolyada]{V.I. Kolyada}
\address{Department of Mathematics\\
Karlstad University\\
Universitetsgatan 1 \\
651 88 Karlstad\\
SWEDEN} \email{viktor.kolyada@gmail.com}

\subjclass[2000]{ 46E30 (primary), 46E35, 42B35 (secondary)}

\keywords{Rearrangements; Lorentz spaces; Embeddings;   Moduli of continuity}

\begin{abstract}

The paper is dedicated to the study of embeddings of the anisotropic Besov spaces $B_{p,\t_1,\dots,\t_n}^{\b_1,\dots,\b_n}(\R^n)$ into Lorentz spaces. We find the sharp asymptotic behaviour of embedding constants when some of the exponents $\b_k$ tend to 1 ($\b_k<1$). In particular, these results give an extension of the estimate proved by Bourgain, Brezis, and Mironescu for isotropic Besov spaces. Also, in the limit, we obtain a link with some known embeddings of anisotropic Lipschitz spaces.

One of the key results of the paper is an anisotropic type estimate of rearrangements in terms of partial moduli of continuity.

\end{abstract}


\dedicatory{Dedicated to Professor O.V. Besov on the occasion of his 90th birthday}

\maketitle

\date{}

\maketitle

\section{Introduction}

In the classical Function Spaces Theory, embeddings of Sobolev spaces and Besov spaces represented two different directions.
The link between them was explicitly exhibited in the following results obtained by Bourgain, Brezis and Mironescu \cite{BBM1}, \cite{BBM2}. First, it was proved in
\cite{BBM1} (see also \cite{Br})  that there exists a limiting
relation between Sobolev and Besov norms, that is,  for any $f\in
W^1_p({\mathbb R}^n)$
 $(1\le p<\infty)$
\begin{equation}\label{bbm1}
\lim_{\a\to 1-0}(1-\a)\int_{\R^n}\int_{\R^n}\frac{|f(x)-f(y)|^p}{|x-y|^{n+\a p}}\,dx\,dy\asymp\|\nabla f\|_{p}^p.
\end{equation}
The main result of \cite{BBM2} concerns a well known Sobolev-type
embedding $B_p^\a\subset L^{p^*},p^*=np/(n-\a p),$ proved in
the late sixties independently by several authors (for the
references, see \cite[Sect. 10]{K4}). In \cite{BBM2}, the sharp asymptotics of the best constant
as $\a\to 1$ in a related inequality were found, namely, if $1/2\le
\a<1$ and $1\le p<n/\a,$ then for any $f\in B_p^\a({\mathbb R}^n)$,
\begin{equation}\label{bourg}
\|f\|^p_{L^{p^*}}\le c_n\frac{1-\a}{(n-\a p)^{p-1}}
\int_{\R^n}\int_{\R^n}\frac{|f(x)-f(y)|^p}{|x-y|^{n+\a p}}\,dx\,dy.
\end{equation}
In view of
(\ref{bbm1}), the classical Sobolev inequality
$$
\|f\|_{L^{np/(n-p)}}\le c\|\nabla f\|_p,\quad 1\le p < n,
$$
can be considered as a limiting case of (\ref{bourg}).
  Note that the
proof of (\ref{bourg}) in \cite{BBM2} was quite complicated.
Afterwards Maz'ya and Shaposhnikova \cite {MS} gave
 a simpler proof  of this result.

 It was  observed in \cite{KL} that  inequality
  (\ref{bourg})  can be directly
derived from the following rearrangement estimate obtained in \cite{K1}:
\begin{equation}\label{estim6}
\int_{\d^n}^{\infty}t^{-p/n}\int_0^t\big(f^*(u)-f^*(t)\big)^pdu
\frac{dt}{t}\le c_{p,n}\left(\frac{\o(f,\d)_p}{\d}\right)^{p},
\end{equation}
($f\in L^p({\mathbb
R}^n),~~1\le p<\infty, ~~n\in \N$, and $\d>0$).
 Moreover, it was shown  in \cite{KL}
that the left-hand side of (\ref{bourg})  can be replaced by a stronger Lorentz norm $\|f\|_{L^{p^*,p}}.$

Later on, limiting embedding theorems were studied by many authors  (see \cite{KMX}, \cite{K8}, \cite{Kcap},  \cite{LS}, \cite{Mil},  \cite{Tri}).

In this paper we prove limiting embeddings for anisotropic Besov spaces defined by conditions on {\it partial} moduli of continuity. Our proof  is based upon
 the following anisotropic analogue of (\ref{estim6}), which is one of the main results of the paper.

 \begin{teo}\label{Anis} Let $f\in L^p(\R^n),\,\, 1\le p<\infty$. There exist positive functions $u_j$ ($j=1,\dots, n)$ on $\R_+$ such that
\begin{equation*}
\prod_{j=1}^n u_j(t)\le t\quad\mbox{for all}\quad t>0
\end{equation*}
and for any $j=1,\dots n$ and any $h>0$
\begin{equation*}
\int_{\Omega_j(h)}\frac{[f^*(t)-f^*(2t)]^p}{u_j(t)^p}\,dt\le c\left(\frac{\o_j(f;h)_p}{h}\right)^p,
\end{equation*}
where $\Omega_j(h)=\{t>0: u_j(t)\ge h\}.$ Besides,
\begin{equation*}
\sup_{t\in \Omega_j(h)}t^{1/p}[f^*(t)-f^*(2t)]/u_j(t)\le c \o_j(f;h)_p/h.
\end{equation*}
\end{teo}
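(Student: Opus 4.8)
The plan is to build the functions $u_j$ directly from the partial moduli of continuity and a dyadic decomposition of the level sets of $f^*$, mimicking the isotropic construction behind (\ref{estim6}) but with $n$ separate "budgets" that must multiply to at most $t$. The starting point is the one-dimensional gain: for each fixed direction $e_j$, freezing all but the $j$-th variable and applying the classical one-variable estimate relating $f^*(t)-f^*(2t)$ to the modulus of continuity in that single variable yields, after integrating over the frozen variables, a bound of the form $\int_0^\infty [f^*(t)-f^*(2t)]^p\,g_j(t)\,dt \le c(\omega_j(f;h)_p/h)^p$ for a suitable density $g_j$ depending on $h$; equivalently, on the dyadic scale $t\sim 2^{-k}$ one controls the quantity $a_k := [f^*(2^{-k})-f^*(2^{-k+1})]$ by the $j$-th modulus. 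Concretely I expect to first prove a dyadic inequality: with $a_k$ as above, for each $j$ and each $h>0$ there is a sequence of weights $\{w_{j,k}(h)\}$ with $\sum_k$ controlled such that $\sum_{k:\, 2^{-k}\ge\text{(something)}} a_k^p / w_{j,k}^p \le c(\omega_j(f;h)_p/h)^p$.

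\textbf{Construction of the $u_j$.} The multiplicative constraint $\prod_j u_j(t)\le t$ is the crux: each $u_j$ should be thought of as the "effective side length" of an axis-parallel box of volume $\le t$ whose $j$-th side is chosen to best exploit the $j$-th modulus. I would define $u_j(t)$ on dyadic blocks so that $\log u_j(2^{-k})$ is, up to normalization, the partial sum $\sum_{i\le k}$ of nonnegative increments $\delta_{j,i}$, chosen to satisfy the two competing requirements: (i) $\sum_j \delta_{j,i} \le (\text{increment of }\log t) = \log 2$, which forces $\prod_j u_j(t)\le t$; and (ii) the increments $\delta_{j,i}$ are large enough in direction $j$ that the weighted sum $\sum a_k^p/u_j(2^{-k})^p$ telescopes against the modulus bound. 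This is a balancing/optimization step: one allocates the total logarithmic budget $\log 2$ per dyadic scale among the $n$ directions in proportion to how much each direction's modulus "needs" at that scale, e.g. via a greedy or proportional rule $\delta_{j,i}\propto$ (local contribution of $a_i$ relative to $\omega_j$). I would verify both (i) and (ii) hold with the same absolute constant $c$ independent of $h$ and $j$.

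\textbf{From the dyadic statement to the integral statement.} Once $a_k^p/u_j(2^{-k})^p$ is summably controlled, the integral $\int_{\Omega_j(h)}[f^*(t)-f^*(2t)]^p u_j(t)^{-p}\,dt$ is recovered by comparing $f^*(t)-f^*(2t)$ on $[2^{-k-1},2^{-k}]$ with $a_k$ (using monotonicity of $f^*$: the function $t\mapsto f^*(t)-f^*(2t)$ on such an interval is dominated, after summing over a bounded number of neighboring blocks, by $a_{k-1}+a_k+a_{k+1}$) and noting $u_j$ is essentially constant on each dyadic block, while $\int_{2^{-k-1}}^{2^{-k}} dt \asymp 2^{-k}$, which is absorbed into the already-established weighted sum. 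The set $\Omega_j(h)=\{u_j(t)\ge h\}$ is, by monotonicity of $u_j$, an interval $(0,T_j(h)]$ or a ray, and it is exactly the range of scales on which the $j$-th modulus $\omega_j(f;h)_p$ is the operative bound; the sup-estimate $\sup_{\Omega_j(h)} t^{1/p}[f^*(t)-f^*(2t)]/u_j(t)\le c\,\omega_j(f;h)_p/h$ follows from the same dyadic inequality by replacing the $\ell^p$ sum with its largest term, since $t^{1/p}a_k \asymp 2^{-k/p}a_k$ is exactly the $\ell^p(dt/t)$-weighted building block.

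\textbf{Main obstacle.} The genuinely hard part is making the budget-allocation in the construction of the $u_j$ simultaneously work for \emph{all} $h>0$ with a single pair of functions $u_j$ — the $u_j$ must not depend on $h$, yet the set $\Omega_j(h)$ and the right-hand side do. Thus the increments $\delta_{j,i}$ must encode, in an $h$-free way, how the $j$-th modulus compares to the others across all scales at once; I expect this to require defining $u_j$ via a "worst-case over $h$" or an integrated quantity, and then checking that for each individual $h$ the restriction to $\Omega_j(h)$ still sees enough of the budget. Establishing that this $h$-uniform choice does not lose more than a constant factor against the scale-by-scale optimal choice is where the real estimation work lies; everything else (the one-dimensional reduction, the dyadic comparisons, passing sup↔$\ell^p$) is routine once that allocation is in place.
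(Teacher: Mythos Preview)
Your plan has a genuine gap at its very first step. You write that ``freezing all but the $j$-th variable and applying the classical one-variable estimate relating $f^*(t)-f^*(2t)$ to the modulus of continuity in that single variable'' will give the needed one-dimensional bounds. But the symbol $f^*$ here is the rearrangement of the \emph{full} $n$-variable function; freezing $\widehat x_j$ and rearranging in $x_j$ gives you control over the one-dimensional sections $(\mathcal R_j f)(t_j,\widehat x_j)$, not over the global $f^*$. There is no direct route from sectionwise one-variable estimates to $f^*(t)-f^*(2t)$ without additional geometric input, and your dyadic budget-allocation scheme, however the increments $\delta_{j,i}$ are chosen, presupposes exactly those bounds you have not established. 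In short, the step you label ``routine'' is actually the whole theorem, and the step you flag as the ``main obstacle'' (making $u_j$ independent of $h$) only becomes an obstacle once the first step is done.

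The paper's argument is entirely different and supplies the missing geometric link. First $f$ is replaced by an iterated rearrangement $\mathcal R_\sigma f$ (Lemma~\ref{MODULI} guarantees this preserves both $f^*$ and all partial moduli up to constants), so one may take $f$ nonnegative and decreasing in each coordinate on $\R_+^n$. For each $t>0$ one looks at the ``annulus'' $G_t=\{f^*(t)<f\le f^*(t/2)\}$ and, via Lemma~\ref{projections}, extracts nested subsets whose $j$-th projections $\Pi_j(G_{t,j})$ have controlled $(n-1)$-measure $\mu_j(t)$; Loomis--Whitney then forces $\prod_j \mu_j(t)\ge c\,t^{n-1}$, and setting $u_j(t)=t/\mu_j(t)$ gives $\prod_j u_j(t)\le t$ immediately. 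The functions $u_j$ thus come from the \emph{geometry of level sets}, not from any dyadic allocation, and there is no monotonicity of $u_j$ (your assumption that $\Omega_j(h)$ is an interval is unfounded). The pointwise link to $\omega_j$ is then obtained by Steklov averaging in the $j$-th variable: writing $f=f_h+\varphi_h$, for $x\in G_t$ one slides along $e_j$ to a point $x'$ with $f(x')\le f^*(2t)$ and bounds $f^*(t)-f^*(2t)\le \varphi_h^*(ct)+c\,\mu_j(t)^{-1}\int_{E_{3t}\setminus E_{t/2}}|\partial_j f_h|$; integrating this over $\Omega_j(h)$ and using the standard bounds $\|\varphi_h\|_p\le\omega_j(f;h)_p$, $\|\partial_j f_h\|_p\le\omega_j(f;h)_p/h$ yields both inequalities in the statement. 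The $h$-independence of $u_j$ is automatic in this construction, since $u_j$ is built from $f$ alone.
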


\vskip 6pt

 Denote by $B_{p,\t;j}^{\a}(\R^n)$  the class of functions $f\in L^p(\R^n)$ satisfying Besov--type condition of the order $0<\a<1$
with respect to variable $x_j$ (see Section 2 below).

Concerning Besov spaces, our main result is the following estimate of the Lorentz norm.

\begin{teo}\label{PRINCIPAL} Let $1\le p<\infty, ~~p\le \t_j\le
\infty,$ and $0<\beta_j<1$ for all $j=1,...,n,$ where $n\ge 2.$ Set
$$
\beta=n\left(\sum_{j=1}^n\frac1{\beta_j}\right)^{-1}, \quad
\t=\frac{n}{\beta}\left(\sum_{j=1}^n\frac1{\beta_j\t_j}\right)^{-1}.
$$
Assume that $1\le p<n/\beta.$  Let $q=np/(n-\beta p).$ Then there exists a constant $c>0$ such that for
any function
$$
f\in B_{p,\t_1,\dots,\t_n}^{\b_1,\dots,\b_n}(\R^n)=\bigcap_{j=1}^n B_{p,\t_j;j}^{\beta_j}(\R^n)
$$
we have that
\begin{equation}\label{main2}||f||_{{q,\t}}\le c
\prod_{j=1}^n\left[(1-\beta_j)^{1/\t_j}||f||_{b_{p,\t_j;j}^{\beta_j}}\right]^{\beta/(n\beta_j)}.
\end{equation}
\end{teo}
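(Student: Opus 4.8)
The plan is to deduce the Lorentz-norm estimate from the rearrangement inequality of Theorem~1.1, exploiting the multiplicative structure of the functions $u_j$. First I would fix the functions $u_1,\dots,u_n$ provided by Theorem~1.1 for the given $f$, so that $\prod_{j=1}^n u_j(t)\le t$ for all $t>0$, together with the two families of estimates (the integral one and the sup one) controlling $[f^*(t)-f^*(2t)]/u_j(t)$ on $\Omega_j(h)$ in terms of $\o_j(f;h)_p/h$. Applying these bounds with $h$ replaced by the dyadic values $2^{-k}$ and summing the Besov-type conditions in the $j$-th variable over $k$, one gets weighted $\ell^{\t_j}$-summability (in $k$) of quantities like $2^{k\b_j}\|\,[f^*(\cdot)-f^*(2\,\cdot)]/u_j\|_{L^p(\Omega_j(2^{-k}))}$, with the norm in $b_{p,\t_j;j}^{\b_j}$ appearing on the right. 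This is the standard device turning a modulus-of-continuity estimate into a discrete mixed-norm estimate.

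Next I would pass to the key geometric step: using $\prod_j u_j(t)\le t$ and an appropriate choice of exponents, interpolate the $n$ one-dimensional estimates into a single bound for a weighted integral of $f^*(t)-f^*(2t)$ against $t^{-\b/n}$, i.e. I want to bound $\int_0^\infty t^{q/n}[f^*(t)-f^*(2t)]^p\,t^{\b p/n-1}\,dt$ — equivalently $\big(\sum_k 2^{-k\b p}\,2^{kp}\cdots\big)$ after dyadic decomposition — by a product of the $n$ right-hand sides raised to the powers $\beta/(n\beta_j)$. Here the exponents $\beta=n(\sum 1/\beta_j)^{-1}$ and $\t=(n/\beta)(\sum 1/(\beta_j\t_j))^{-1}$ are exactly the harmonic-type means that make a Hölder/Young inequality in the $k$-variable close up: writing $1 = \sum_j \frac{\beta}{n\beta_j}$, I would split $t^{\text{power}}$ as $\prod_j \big(u_j(t)\big)^{-\text{exponent}_j}$ up to the factor $t\ge \prod u_j$, apply Hölder in $k$ with exponents dictated by the $\t_j$'s (which is where the condition $p\le\t_j$ and the formula for $\t$ enter), and collect the one-dimensional norms. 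The dependence of the constant on $(1-\beta_j)^{1/\t_j}$ is tracked by carrying the explicit constants from Theorem~1.1 (which are independent of $\b_j$) through this Hölder step; the powers $1/\t_j$ arise because each $\ell^{\t_j}$-sum over the $\sim (1-\beta_j)^{-1}$ relevant scales contributes a factor $(1-\beta_j)^{1/\t_j}$ when one passes from the homogeneous Besov seminorm normalization to the summed one.

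Finally, I would convert the weighted integral of $f^*(t)-f^*(2t)$ into the Lorentz norm $\|f\|_{q,\t}$. Since $q=np/(n-\beta p)$ and $\t$ is as defined, the quantity $\big(\int_0^\infty (t^{1/q}f^*(t))^\t\,dt/t\big)^{1/\t}$ is comparable, by a standard Hardy-type argument applied to the telescoping differences $f^*(2^{k}t)-f^*(2^{k+1}t)$ (using $q>p$ and $\t\ge p$, hence the discrete Hardy inequality in $\ell^{\t/p}$), to the expression we have just estimated; this is the same passage used in \cite{KL} to upgrade \eqref{bourg} to a Lorentz-norm statement, now in the anisotropic dyadic setting. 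The main obstacle, I expect, is the bookkeeping in the middle step: making the splitting $t^{\theta}=\prod_j u_j(t)^{-\theta_j}$ compatible simultaneously with the $n$ different integrability exponents $\t_j$ and with the target index $\t$, while keeping the constants' dependence on $1-\beta_j$ sharp. Everything else — the dyadic discretization, the passage from moduli of continuity to sums, and the final Hardy inequality — is routine once the exponent identities $\sum\frac{\beta}{n\beta_j}=1$ and $\frac1\t=\frac{\beta}{n}\sum\frac1{\beta_j\t_j}$ are used to drive the Hölder applications.
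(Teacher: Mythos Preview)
Your overall plan coincides with the paper's: reduce $\|f\|_{q,\t}$ to the quantity
\[
J=\Big(\int_0^\infty t^{\t/q-1}[f^*(t)-f^*(2t)]^\t\,dt\Big)^{1/\t},
\]
use $\prod_j u_j(t)\le t$ to replace $t^{-\t\b/n}$ by $\prod_j u_j(t)^{-\t\b/n}$, apply H\"older with exponents $r_k=n\t_k\b_k/(\t\b)$ (so that $\sum 1/r_k=1$ and $\sum \t_k/r_k=\t$), and control each resulting factor by means of Theorem~\ref{Anis}. Two points, however, deserve correction.

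First, the paper does \emph{not} discretize and does not invoke a Hardy inequality. The reduction $\|f\|_{q,\t}\le (1-2^{-1/q})^{-1}J$ is a one-line Minkowski argument, and everything after is done with continuous integrals in $t$ and $h$.

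Second --- and this is the real gap --- your account of how the factor $(1-\b_j)^{1/\t_j}$ appears is not the mechanism that actually works. After H\"older one is left with
\[
J_k^{\t r_k}=\int_0^\infty t^{\t_k/p-1}\,[f^*(t)-f^*(2t)]^{\t_k}\,u_k(t)^{-\t_k\b_k}\,dt,
\]
and the paper extracts the sharp constant by the identity
\[
u_k(t)^{\t_k(1-\b_k)}=\t_k(1-\b_k)\int_0^{u_k(t)} h^{\t_k(1-\b_k)-1}\,dh,
\]
followed by Fubini, which turns $J_k^{\t r_k}$ into $\t_k(1-\b_k)\int_0^\infty h^{\t_k(1-\b_k)-1}\int_{\Omega_k(h)}t^{\t_k/p-1}\big(\f(t)/u_k(t)\big)^{\t_k}\,dt\,dh$; the inner integral is then bounded by $c^{\t_k}(\o_k(f;h)_p/h)^{\t_k}$ using \emph{both} estimates of Theorem~\ref{Anis} (the sup-bound supplies the extra $\t_k-p$ powers, which is exactly where $\t_k\ge p$ is used). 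This yields $J_k^{\t r_k}\le c^{\t_k}\t_k(1-\b_k)\|f\|_{b^{\b_k}_{p,\t_k;k}}^{\t_k}$ with the factor $(1-\b_k)$ coming directly from the derivative of $h^{\t_k(1-\b_k)}$. Your heuristic of ``$\sim(1-\b_j)^{-1}$ relevant scales'' does not capture this; indeed, a naive dyadic annulus decomposition $\{2^{-m}\le u_k(t)<2^{-m+1}\}$ of the $t$-integral produces the Besov seminorm \emph{without} the prefactor $(1-\b_k)$, and hence misses the whole point of the theorem. If you insist on a dyadic route, you must reproduce the Fubini step discretely, i.e.\ use $u_k(t)^{\t_k(1-\b_k)}\asymp \t_k(1-\b_k)\sum_{2^{-m}\le u_k(t)}2^{-m\t_k(1-\b_k)}$ before summing in $m$; otherwise the sharp dependence on $1-\b_k$ is lost.
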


\vskip 6pt

Observe that we confine ourselves to the case when $\t_j\ge p$ for all $j$. The case when for some $j$ we have  $\t_j<p$,  remains open.

 In the case $\t_j=p$ $(j=1,\dots,n)$ inequality (\ref{main2}) was proved in \cite{K8} using embeddings of mixed norm spaces.
Further,  if $\t\le q$, then (\ref{main2}) follows from a stronger inequality
(involving iterated rearrangements) proved in \cite{K9} (see Section 5 below). In the present paper we find a complete proof of Theorem \ref{PRINCIPAL}
based upon Theorem \ref{Anis}.

As in (\ref{bourg}), the most important role in  inequality (\ref{main2}) is played by the
 factors $(1-\b_j)^{1/\t_j}$. Actually, without these factors this inequality   is known (see \cite[\S 18]{BIN}, \cite{Gol}, \cite{K4},  \cite{Pe}, \cite{Perez}); moreover, in such form
  it can be  obtained  for all $1\le \t_j\le \infty$, using the "weak type" rearrangement estimates (see \cite[Section 10]{K4}, \cite{Perez}).

Inequality (\ref{main2})
gives a sharp estimate of the  Lorentz norms of
the function $f$ in terms of its Besov norms.
However, the problem can be formulated in a different way: given a
function $f\in L^p(\R^n),$ find sharp estimates of its "size" in
terms of partial moduli of continuity of $f$ (partly, such estimates are contained in Theorem \ref{Anis}).
 This  problem
was posed by  Ul'yanov \cite{U}. It is  more general and may lead
to essentially sharper results.  We refer to our survey paper \cite[Ch. 7]{K9} for the discussion of this question.

Theorem \ref{PRINCIPAL} provides the link between embeddings of Besov-Nikol'skii spaces and embeddings of Lipschitz type spaces with limiting exponent. If $1\le p<\infty$ and $0<\a_k\le 1\,\, (k=1,\dots,n)$ then the Lipschitz space $\Lambda_p^{\a_1,\dots,\a_n}(\R^n)$ is defined as the class of all functions $f\in L^p(\R^n)$ for which the partial moduli of continuity satisfy the conditions
$$
\o_k(f;\d)_p=O(\d^{\a_k})\quad (k=1,\dots,n)
$$
(see \cite{K5}, \cite{K4},  \cite{K9}). It was  shown in \cite[p. 75]{K9} that some known embeddings for the spaces $\Lambda_p^{\a_1,\dots,\a_n}(\R^n)$
can be obtained as limiting case of
Theorem \ref{PRINCIPAL}.

\section{Definitions and auxiliary propositions}

Throughout this paper we shall use the following notation. Let  $x=(x_1,\ldots,x_n).$  Denote by $\widehat{x}_k$
the $(n-1)$-dimensional vector obtained from the $n$-tuple $x$ by
removal of its $k$-th coordinate.
 We shall write
 $x=(x_k,\widehat x_k).$

\subsection{Nonincreasing rearrangements and Lorentz spaces}

Let $f$ be a measurable function $f$ on $\mathbb{R}^n.$ {\textit The distribution function} of $f$ is defined by the equality
\begin{equation*}
\lambda_f (y)= | \{x \in \mathbb{R}^n : |f(x)|>y \}|, \quad y\ge 0.
\end{equation*}
The function $f$ is said to be {\it rearrangeable} if $\l_f(y)<\infty$ for all $y>0.$ We denote by $S_0(\R^n)$ the class of all rearrangeable functions on $\R^n.$

{\it{A nonincreasing rearrangement}} of a function $f\in S_0(\R^n)$
 is a nonnegative and nonincreasing function $f^*$ on
$\mathbb{R}_+ $ which is equimeasurable with $|f|$, that is, $\l_{f^*}=\l_f.$
We shall assume in addition that the rearrangement $f^*$ is left-continuous on $\R_+.$ Under this condition it is defined
uniquely by
 the equality (see \cite[p. 32]{ChR}, \cite{Ksib})
\begin{equation}\label{rearrangement}
f^*(t) = \sup_{|E|=t} \inf_{x \in E} |f(x)|,\quad 0<t<\infty.
\end{equation}

 Let $0<p,r<\infty.$ A rearrangeable
function $f$ on $\R^n$ belongs to the Lorentz space
$L^{p,r}(\mathbb{R}^n)$ if
\begin{equation*}
\|f\|_{L^{p,r}}=\|f\|_{p,r} = \left(\int_0^\infty \left( t^{1/p} f^*(t)
\right)^r\, \frac{dt}{t} \right)^{1/r} < \infty.
\end{equation*}
For $0<p<\infty,$ the space $L^{p,\infty}(\mathbb{R}^n)$ is
defined as the class of all rearrangeable functions $f$ on $\R^n$ such that
$$
\|f\|_{p,\infty} = \sup_{t>0}t^{1/p} f^*(t)<\infty.
$$

We have that $||f||_{p,p}=||f||_p.$ For a
fixed $p$, the Lorentz spaces $L^{p,r}$ strictly increase as the secondary
index $r$ increases; that is, the strict embedding $L^{p,r}\subset L^{p,s}~~~(r<s)$ holds (see \cite[Ch. 4]{BS}).

\subsection{Moduli of continuity}
Let $f\in
L^p({\mathbb R}^n)$ and $k\in\{1,...,n\}.$ Set for $h\in \R$
$$
I_k(f;h)_p=\left(\int_{\mathbb R^n}
|f(x+he_k)-f(x)|^p~dx\right)^{1/p}
$$
($e_k$ is the $k-$th unit coordinate vector).
The partial modulus of
continuity of $f$ in $L^p$ with respect to $x_k$ is defined by
$$
\o_k(f;\delta)_p =\sup_{|h|\le \d} I_k(f;h)_p,\quad \d\ge 0.
$$
 Of course, in the case $n=1$ we write simply $\o(f;\delta)_p.$

 We shall use the well known    inequality
\begin{equation}\label{int}
\o_k(f;\d)_p\le \frac{3}{\d}\int_0^\d I_k(f;h)_p\,dh \quad (\d>0),
\end{equation}
which easily follows from the subadditivity of $I_k(f;h)$ (see, e.g.,  \cite[Lemma 2.1]{K8}).

We shall call {\it modulus of continuity} any non-decreasing,
continuous and bounded function $\o(\d)$ on $[0,+\infty)$ which
satisfies the conditions
\begin{equation}\label{d1}
 \o(\d+\eta)\le \o(\d)+\o(\eta),
\quad \o(0)=0.
\end{equation}

It is easy to see that for any $f\in L^p({\mathbb R}^n)$ the functions
$\o_j(f;\d)_p$ are moduli of continuity.

If $\o$ is a modulus of continuity, then by (\ref{d1})
\begin{equation}\label{d2}
\o(2^k\d)\le 2^k\o(\d)\quad\mbox{for any}\quad k\in\N\quad\mbox{and any}\quad \d>0.
\end{equation}
It follows that
\begin{equation}\label{d3}
\o(\mu)/\mu\le 2\o(h)/h \quad\mbox{for}\quad 0<h<\mu.
\end{equation}

  Observe that for a modulus of continuity $\o$ the
function $\o(\d)/\d$ may not be monotone. However,  for
any modulus of continuity $\o$
\begin{equation}\label{d4}
\lim_{\d\to 0+}\frac{\o(\d)}{\d}=\sup_{\d>0}\frac{\o(\d)}{\d}.
\end{equation}
Indeed, denote the latter supremum by $\s$. If $\s=\infty,$ then  (\ref{d4}) follows immediately from (\ref{d2}).
Let $\s<\infty$ and let $0<\e<\s/2.$ There exists $\d_0>0$ such that $\o(\d_0)>(\s-\e)\d_0.$ Set $\d_k=2^{-k}\d_0,$
$k\in\N.$ Then by (\ref{d2}) $\o(\d_k)>(\s-\e)\d_k.$ Using (\ref{d1}), we have for any $\d_{k+1}<\d<\d_k$
$$
\o(\d_k)\le \o(\d)+ \o(\d_k-\d).
$$
Since $\o(\d_k-\d)\le \s(\d_k-\d),$ we obtain that $\o(\d)>(\s-2\e)\d.$ This implies (\ref{d4}).

Let $f\in L^1_{\operatorname{loc}}(\R^n))$, $j\in \{1,\dots, n\},$ and $h>0.$ We consider the Steklov means
$$
f_{h,j}(x)=\frac1h\int_0^h f(x+ue_j)\,du.
$$
It is easy to see that for any $f\in L^p(\R^n)\,\,(1\le p<\infty)$
\begin{equation}\label{steklov1}
||f-f_{h,j}||_p\le \o_j(f;h)_p
\end{equation}
and
\begin{equation}\label{steklov2}
\left\|\frac{\partial{f_{h,j}}}{\partial{x_j}}\right\|_p\le \frac{\o_j(f;h)_p}{h}.
\end{equation}

In what follows we shall use estimates of
the $L^p-$modulus of continuity of the rearrangement $f^*$ in
terms of the modulus of continuity of a given function $f.$

The first sharp results in this direction were obtained  independently
by  Oswald \cite{Osw1} and  Wik \cite{Wik1}. In particular, it was proved, that
 for any $f\in L^p[0,1], ~~1\le p<\infty,$
\begin{equation}\label{o-w2}
\o(f^*;\d)_p\le 2\o(f;\d)_p, \quad 0\le \d\le \frac12.
\end{equation}

\vskip 8 pt

\subsection{Besov spaces}

Let $0<\a<1, ~k\in\{1,...,n\},$ and $1\le p,\t<\infty.$ The Besov
 space $B^\a_{p,\t;k}({\mathbb R}^n)$ is defined as the space of
 all functions
 $f\in L^p(\mathbb {R}^n)$ such that
$$
\|f\|_{b^\a_{p,\t;k}}=
\left(\int_0^{\infty}\left(t^{-\a}\o_k(f;t)_p\right)^\t
\frac{dt}{t}\right)^{1/\t}<\infty.
$$
For $\t=\infty$, the space $B^\a_{p,\infty;k}({\mathbb R}^n)$ consists of
 all functions
 $f\in L^p(\mathbb {R}^n)$ such that
 $$
\|f\|_{b^\a_{p,\infty;k}}=
\sup_{\d>0}\frac{\o_k(f;\d)_p}{\d^\a}<\infty.
$$
Denote also $B^\a_{p,p;k}= B^\a_{p;k}.$

It is easy to see that
\begin{equation}\label{predel}
\|f\|_{b^\a_{p,\infty;k}}=\lim_{\t\to +\infty} \|f\|_{b^\a_{p,\t;k}}.
\end{equation}

Let $0<\a_k<1,\,\,1\le \t_k\le \infty ~~(k=1,...,n),$ and $1\le p<\infty.$ Then we set
 $$
 B^{\a_1,...,\a_n}_{p,\t_1,\dots,\t_n}({\mathbb R}^n)
=\bigcap_{k=1}^nB^{\a_k}_{p,\t_k;k}({\mathbb R}^n).
$$

Observe that in these definitions and notations we follow Nikol'skii's book \cite{Nik}.

As we have already mentioned in the Introduction,
Bourgain, Brezis and Mironescu \cite{BBM1}
found a limiting relation between Sobolev and Besov norms (see (\ref{bbm1})).

For the partial Besov norms we have the following statement.
\begin{lem}\label{K-L} Let $f\in B^\a_{p,\t;k}(\R^n)$, where $1\le p,\t<\infty,$ $0<\a<1$, and  $1\le k\le n$. Then
\begin{equation}\label{k-l}
\lim_{\a\to
1-0}(1-\a)^{1/\t}\|f\|_{b^{\a}_{p,\t;k}}=\left(\frac{1}{\t}\right)^{1/\t}\sup_{\d>0}
\frac{\o_k(f,\d)_p}{\d}\,.
\end{equation}
\end{lem}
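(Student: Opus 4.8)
The plan is to show that $(1-\a)^{1/\t}\|f\|_{b^\a_{p,\t;k}}$ converges as $\a\to 1-0$ to $\t^{-1/\t}\s$, where $\s=\sup_{\d>0}\o_k(f;\d)_p/\d$, splitting into the cases $\s<\infty$ and $\s=\infty$. The essential observation is that, writing $\o(\d)=\o_k(f;\d)_p$ (a modulus of continuity by the remarks in Section~2), we have
$$
(1-\a)\|f\|_{b^\a_{p,\t;k}}^\t=(1-\a)\int_0^\infty\left(\frac{\o(t)}{t^\a}\right)^\t\frac{dt}{t}=(1-\a)\int_0^\infty\left(\frac{\o(t)}{t}\right)^\t t^{(1-\a)\t}\frac{dt}{t},
$$
so the factor $t^{(1-\a)\t-1}$ concentrates all the mass near $t=0$ as $\a\to1$, while $\o(t)/t\to\s$ as $t\to0+$ by (\ref{d4}). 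Since $\o$ is bounded, say $\o(t)\le M$, the integral over $[1,\infty)$ is at most $M^\t(1-\a)\int_1^\infty t^{(1-\a)\t-1}\,dt=M^\t/\t$, which stays bounded but does not vanish; so one must be slightly careful and cut at a small $\d$ rather than at $1$.

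First I would prove the lower bound. Fix $\e>0$ and, using (\ref{d4}), choose $\d>0$ with $\o(t)/t>\s-\e$ for all $0<t\le\d$ (when $\s<\infty$; when $\s=\infty$ choose $\d$ so that $\o(t)/t>N$ on $(0,\d]$ for an arbitrary large $N$). Then
$$
(1-\a)\|f\|_{b^\a_{p,\t;k}}^\t\ge(\s-\e)^\t(1-\a)\int_0^\d t^{(1-\a)\t-1}\,dt=(\s-\e)^\t\frac{\d^{(1-\a)\t}}{\t}.
$$
Letting $\a\to1-0$ gives $\liminf\ge(\s-\e)^\t/\t$, and then $\e\to0$ (resp. $N\to\infty$) yields $\liminf_{\a\to1-0}(1-\a)\|f\|_{b^\a_{p,\t;k}}^\t\ge\s^\t/\t$ (resp. $=\infty$), which is the desired lower bound after taking $\t$-th roots. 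This already finishes the case $\s=\infty$.

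For the upper bound (when $\s<\infty$) I would again fix $\e>0$ and $\d>0$ as above with $\o(t)/t<\s+\e$ on $(0,\d]$, and split the integral at $\d$. On $(0,\d]$ the same computation gives a contribution $\le(\s+\e)^\t\d^{(1-\a)\t}/\t\to(\s+\e)^\t/\t$. On $[\d,\infty)$ we use $\o(t)\le M$ together with the elementary bound $\o(t)/t\le 2\o(\d)/\d\le 2\s$ for $t\ge\d$ from (\ref{d3}); more efficiently, $\o(t)/t\le\o(\d)/\d+\e'$ is not monotone, so I would instead just use $\o(t)\le M$ to get
$$
(1-\a)\int_\d^\infty\left(\frac{\o(t)}{t}\right)^\t t^{(1-\a)\t-1}\,dt\le M^\t(1-\a)\int_\d^\infty t^{(1-\a)\t-\t-1}\,dt=\frac{M^\t(1-\a)}{(\t-1)(1-\a)+(\t-1)\cdots}
$$
— here the exponent $(1-\a)\t-\t-1=-(1-(1-\a))\t-1<-1$ is strictly less than $-1$ for $\a$ close to $1$, the integral converges, and the prefactor $(1-\a)$ forces this term to $0$ as $\a\to1-0$. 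Hence $\limsup_{\a\to1-0}(1-\a)\|f\|_{b^\a_{p,\t;k}}^\t\le(\s+\e)^\t/\t$, and $\e\to0$ gives $\le\s^\t/\t$. Combining with the lower bound and taking $\t$-th roots yields (\ref{k-l}).

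The only mildly delicate point — the ``main obstacle'' — is handling the tail $[\d,\infty)$: one must exploit both the boundedness of $\o$ and the fact that the exponent of $t$ there is eventually $<-1$, so that the $(1-\a)$ prefactor kills the tail; a naive estimate using only $\o(t)/t\le2\s$ would leave a nonvanishing $O((1-\a)\cdot\text{(divergent integral)})$ and fail. Everything else is the standard Laplace-type concentration argument, relying on (\ref{d4}) to pin down the limit of $\o(t)/t$ at the origin.
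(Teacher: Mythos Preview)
Your argument is correct and is exactly the ``standard argument'' the paper alludes to via (\ref{d4}) (the paper gives no detailed proof, only cites \cite{KL}, \cite{K9}). The displayed tail computation is garbled, but your verbal conclusion is right: the exponent is $-\a\t-1<-1$, so $(1-\a)\int_\d^\infty t^{-\a\t-1}\,dt=(1-\a)\d^{-\a\t}/(\a\t)\to 0$ as $\a\to 1-0$; note also that on $(0,\d]$ you may simply use $\o(t)/t\le\s$, so the $\e$ is needed only for the lower bound.
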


For the proof, see \cite{KL}, \cite{K9}; actually, in view of (\ref{d4}), it easily follows by standard arguments.

\vskip 8pt

\subsection{Class $\mathcal M_{dec}(\R^n_+)$ and iterated rearrangements}

In this subsection we shall consider functions defined on $\R^n_+.$

Denote by $\mathcal{M}_{dec}(\R^n_+)$ the class of all nonnegative functions on $\R^n_+$ which are nonincreasing in each variable.

Let $f$ be a rearrangeable function on $\R^n$ and let $1\le k \le n.$ We fix
$\widehat{x}_k\in \Bbb R^{n-1} $ and consider the
$\widehat{x}_k$-section of the function $f$
$$
f_{\widehat{x}_k}(x_k)=f(x_k,\widehat{x}_k), \quad x_k\in \Bbb R.
$$
For almost all
 $\widehat{x}_k\in \Bbb R^{n-1}$ the function
$f_{\widehat{x}_k}$ is rearrangeable on $\R$. We set
$$
\mathcal R_kf(u,\widehat{x}_k)=f_{\widehat{x}_k}^*(u), \quad u\in
\Bbb R_+.
$$
Stress that the $k$-th argument of the function $\mathcal R_kf$ is
equal to $u.$ The function
 $\mathcal R_kf$ is defined almost everywhere on  $\Bbb R_+\times \Bbb
R^{n-1}$; we call it {\it {the  rearrangement of $f$ with respect to the
$k$-th variable}.} It is easy to show that
$\mathcal R_kf$ is a measurable function equimeasurable with
$|f|$.
Let $\mathcal P_n$ be the collection of all permutations
 of the set $\{1,\dots,n\}$. For each
$\sigma=\{k_1,\dots,k_n\}\in \mathcal P_n$ we call the function
$$
\mathcal R_\sigma f(t)=\mathcal R_{k_n}\cdots\mathcal R_{k_1}f(t), \quad
t\in \Bbb R_+^n,
$$
the $\mathcal R_\sigma$-rearrangement of  $f$. Thus, we obtain
$\mathcal R_\sigma f$  by "rearranging"  $f$ in non-increasing
order successively with respect to the variables
$x_{k_1},\dots,x_{k_n}$.
The rearrangement
$\mathcal R_\sigma f$  is defined on $\R_+^n.$ It is nonnegative, nonincreasing in each variable,
and equimeasurable
with  $|f|$  (see \cite{{BKPS}, Bl,  K9, K2012}).

The next  lemma follows by known reasonings (see \cite{Osw_dis}). We outline the proof, because this lemma is essentially used  in the sequel.

\begin{lem}\label{MODULI} Let $f\in L^p(\R^n)~~(1\le p<\infty).$
Then for each permutation $\sigma\in \mathcal P_n$ and each
$k=1,...,n$
\begin{equation}\label{moduli}
\o_k(\mathcal R_\sigma f;\d)_p\le 3^n\o_k(f;\d)_p \quad\text{for
all}\quad \d\ge 0.
\end{equation}
\end{lem}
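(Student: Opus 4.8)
The plan is to prove the estimate one rearrangement at a time and then iterate. The key observation is that applying $\mathcal R_k$ commutes nicely with translations in the $k$-th direction but behaves more delicately with translations in the other variables. So I would split the argument into two cases, according to whether the coordinate direction $e_k$ along which we measure the modulus of continuity coincides with the direction in which a single $\mathcal R$-operation rearranges.

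First I would record the one-step facts. Fix a single operation $g\mapsto \mathcal R_m g$. If $k=m$: then for almost every $\widehat x_m$ the section $(\mathcal R_m g)_{\widehat x_m}=(g_{\widehat x_m})^*$, so applying the one-dimensional Oswald--Wik inequality (\ref{o-w2}) on each section and integrating in $\widehat x_m$ gives $I_m(\mathcal R_m g;h)_p\le 2\, I_m(g;h)_p$ (after the usual reduction to a bounded interval, or by the standard $\R$-version of (\ref{o-w2})), hence $\o_m(\mathcal R_m g;\d)_p\le 2\,\o_m(g;\d)_p$. If $k\ne m$: here I would use that the translation $\tau_h^k: x\mapsto x+he_k$ acts only on the variables other than $x_m$, so it commutes with the section-wise rearrangement in the following sense: $\mathcal R_m(\tau_h^k g)$ and $\tau_h^k(\mathcal R_m g)$ are both obtained by rearranging, in $x_m$, the function $x\mapsto g(x+he_k)$. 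More precisely, for a.e. $\widehat x_m$ one has $(\tau_h^k g)_{\widehat x_m}(x_m)=g_{\widehat{x}_m + h\widehat e_k}(x_m)$ where $\widehat e_k$ is $e_k$ with the $m$-th slot deleted; taking the nonincreasing rearrangement in $x_m$ and using that rearrangement of a function on $\R$ is a contraction with constant $1$ in $L^p$ for the operation "translate the parameter", i.e. $\|f^*-g^*\|_p\le\|f-g\|_p$ for rearrangeable $f,g$ on $\R$ (this is the standard nonexpansivity of $*$, see e.g. the references cited for (\ref{o-w2})), we get $\|(\mathcal R_m g)(\cdot+he_k)-(\mathcal R_m g)(\cdot)\|_{L^p(\R^n)}\le \|g(\cdot+he_k)-g(\cdot)\|_{L^p(\R^n)}$, that is $\o_k(\mathcal R_m g;\d)_p\le \o_k(g;\d)_p$ for $k\ne m$.

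Now I would iterate. Write $\sigma=\{k_1,\dots,k_n\}$ and set $g_0=f$, $g_i=\mathcal R_{k_i}g_{i-1}$, so $\mathcal R_\sigma f=g_n$. For the fixed index $k$, exactly one of the $k_i$ equals $k$; at that single step the one-step bound contributes a factor $2$, and at each of the remaining $n-1$ steps the one-step bound contributes a factor $1$. Chaining the inequalities from the previous paragraph along $g_0\to g_1\to\cdots\to g_n$ yields
\begin{equation*}
\o_k(\mathcal R_\sigma f;\d)_p\le 2\,\o_k(f;\d)_p\le 3^n\o_k(f;\d)_p,
\end{equation*}
which is (\ref{moduli}) (with the crude constant $3^n$ stated; in fact $2$ suffices). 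The main obstacle is the bookkeeping in the case $k\ne m$: one must check carefully that translation in a direction transverse to the rearrangement direction really does descend to a parameter-translation on the $x_m$-sections, and that the nonexpansivity of the one-dimensional rearrangement is being applied to the correct pair of sections; once that is set up, everything else is a routine iteration. A minor technical point is the passage from $[0,1]$ (where (\ref{o-w2}) is stated) to $\R$, handled as usual by truncation/exhaustion since $f\in L^p(\R^n)$.
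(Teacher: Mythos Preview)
Your step-by-step strategy is essentially the same as the paper's inductive proof: both use nonexpansivity of the one-dimensional rearrangement for the transverse steps and the Oswald--Wik inequality (\ref{o-w2}) for the single parallel step. Your case $k\ne m$ is correct as written.

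There is, however, a genuine gap in the case $k=m$. You assert the \emph{increment} bound $I_m(\mathcal R_m g;h)_p\le 2\,I_m(g;h)_p$, but (\ref{o-w2}) only gives a \emph{modulus} bound on each section. Applying (\ref{o-w2}) section-wise and integrating yields
\[
I_m(\mathcal R_m g;h)_p^p\le 2^p\int_{\R^{n-1}}\omega(g_{\widehat x_m};h)_p^p\,d\widehat x_m
=2^p\int_{\R^{n-1}}\sup_{|u|\le h}I(g_{\widehat x_m};u)_p^p\,d\widehat x_m,
\]
and the supremum is \emph{inside} the integral, whereas $\omega_m(g;h)_p^p$ has it outside; since $\int\sup\ge\sup\int$, you cannot pass from one to the other. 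In fact the increment-to-increment claim is false: take $g(x_m,\widehat x_m)=\phi(x_m)\psi(\widehat x_m)$ with $\phi$ periodic of period $h$ and not monotone; then $I_m(g;h)_p=0$ while $I_m(\mathcal R_m g;h)_p>0$.

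The fix is exactly what the paper does: use the averaging inequality (\ref{int}) to replace the section-wise modulus by an average of increments, then apply Jensen and Fubini. This gives
\[
\int_{\R^{n-1}}\omega(g_{\widehat x_m};h)_p^p\,d\widehat x_m\le\frac{3^p}{h}\int_0^h I_m(g;u)_p^p\,du\le 3^p\,\omega_m(g;h)_p^p,
\]
whence $\omega_m(\mathcal R_m g;\delta)_p\le 6\,\omega_m(g;\delta)_p$. Chaining with the constant-$1$ transverse steps yields $\omega_k(\mathcal R_\sigma f;\delta)_p\le 6\,\omega_k(f;\delta)_p\le 3^n\omega_k(f;\delta)_p$. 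So your organization actually produces a better constant than $3^n$ once the gap is repaired, but the claim that $2$ suffices is not justified.
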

\begin{proof} We apply induction. For $n=1$ (\ref{moduli}) follows from (\ref{o-w2}). Assume that $n\ge 2$ and (\ref{moduli})
holds for the dimension $n-1$. Let  $\s=\{k_1,\dots,k_n\}$ and $\widehat \s_n=\{k_1,\dots,k_{n-1}\}.$ Without loss of generality we may assume that $k_n=n$.
Set
$$
\f(t)=\mathcal R_\s f(t),\quad \psi(\widehat t_n, x_n)=\mathcal R_{\widehat \s_n}f(\widehat t_n, x_n),\,\,\,(t=(\widehat t_n, t_n)\in\R_+^n,\,\, x_n\in\R),
$$
$$
I_k(h)=\left(\int_{\R_+^n}|\f(t)-\f(t+he_k)|^p\,dt\right)^{1/p}\quad (h>0).
$$
 First we assume that $1\le k\le n-1.$ Fix $\widehat t_n\in\R^{n-1}_+$. Then
$\f(\widehat t_n, t_n)$ is the rearrangement of $\psi(\widehat t_n, x_n)$ with respect to $x_n$. By nonexpansivity of rearrangement (see \cite[p. 83]{LL}),
\begin{equation*}
\int_{\R_+}|\f(\widehat t_n, t_n)-\f(\widehat t_n+he_k, t_n)|^p\,dt_n\le
\int_{\R}|\psi(\widehat t_n, x_n)-\psi(\widehat t_n+he_k, x_n)|^p\,dx_n.
\end{equation*}
By our inductive hypothesis and (\ref{int}), for a fixed $x_n$
$$
\begin{aligned}
&\int_{\R_+^{n-1}}|\psi(\widehat t_n, x_n)-\psi(\widehat t_n+he_k, x_n)|^p\,d\widehat t_n\\
&\le 3^{(n-1)p}\o_k(f_{x_n};h)_p^p\le \frac{3^{np}}{h}\int_0^h\int_{\R^{n-1}}|f(x)-f(x+ue_k)|^p\,d\widehat x_n\,du,
\end{aligned}
$$
where $f_{x_n}(\widehat x_n)=f(x_n,\widehat x_n).$ Thus,
$$
I_k(h)^p\le \frac{3^{np}}{h}\int_0^h\int_{\R^n}|f(x)-f(x+ue_k)|^p\,dx\,du,
$$
and we obtain (\ref{moduli}) for $1\le k<n.$

Let now $k=n$. By (\ref{o-w2}) and (\ref{int}), for a fixed $\widehat t_n$ we have
$$
\int_{\R_+}|\f(\widehat t_n,t_n)-\f(\widehat t_n,t_n+h)|^p\,dt_n\le \frac{6^p}{h}\int_0^h\int_\R |\psi(\widehat t_n, x_n)-\psi(\widehat t_n, x_n+u)|^p\,dx_n\,du.
$$
Integrating with respect to  $\widehat t_n$, we obtain
$$
I_k(h)^p\le \frac{6^{p}}{h}\int_0^h\int_{\R_+^{n-1}}\int_\R|\psi(\widehat t_n, x_n)-\psi(\widehat t_n, x_n+u)|^p\,dx_n\,d\widehat t_n\,du.
$$
The nonexpansivity of rearrangement implies that for fixed $x_n$ and $u$
$$
\int_{\R_+^{n-1}}|\psi(\widehat t_n, x_n)-\psi(\widehat t_n, x_n+u)|^p\,d\widehat t_n\le
\int_{\R^{n-1}}|f(\widehat x_n, x_n)-f(\widehat x_n, x_n+u)|^p\,d\widehat x_n.
$$
These inequalities imply (\ref{moduli}) for $k=n.$
\end{proof}

\vskip 6pt

\subsection{Projections and sections}

Let $E\subset \R^n.$ For every $k=1,...,n,$ denote by $\Pi_k(E)$
the orthogonal projection of $E$ onto the coordinate hyperplane
$x_k=0.$ Further, if $\widehat{x}_k\in \R^{n-1},$ then  by
$E(\widehat{x}_k)$ we denote the $\widehat x_k-$section of $E,$
$$
E(\widehat{x}_k)=\{x_k\in \R: (x_k,\widehat{x}_k)\in E\}.
$$

If a set $A\subset \R^n$ is contained in some $k-$dimensional plane in $\R^n$ and is measurable with respect to the $k-$dimensional Lebesgue measure, then its
Lebesgue measure
will be
denoted by $\mes_k A.$ For $k=n$ we denote the $n$-dimensional measure also by $|A|$.

Considering sets in $\R^n$, we assume that they are of type $F_\s.$ Then their projections and sections also are of type $F_\s$ and therefore are Lebesgue measurable in the corresponding spaces.

We shall use the following lemma.

\begin{lem}\label{projections} Let $E\subset \R^n$ be a set of type $F_\s$ with $|E|=t, \,\,0<t<\infty.$ Then there exist  sets
\begin{equation}\label{project0}
E_0\supset E_1\supset E_2\supset\dots\supset E_n\quad(E_0=E)
\end{equation}
of type $F_\s$ with $|E_j|=2^{-j}t$ such that for any $j=1,\dots, n$ and any $F_\s$-set $A\subset E_{j-1}$ with
$|A|\ge 2^{-j}t$ it holds the inequality
\begin{equation}\label{project}
\mes_{n-1} \Pi_j(E_j)\le \mes_{n-1} \Pi_j(A).
\end{equation}
\end{lem}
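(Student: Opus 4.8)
The plan is to build the chain (\ref{project0}) inductively, choosing at each step $E_j\subset E_{j-1}$ to be, roughly, the union of the ``longest'' $x_j$-sections of $E_{j-1}$; the desired inequality (\ref{project}) will then simply express that $E_j$ minimizes $\mes_{n-1}\Pi_j$ among the admissible competitors. Thus everything reduces to the following one-step assertion: \emph{if $F\subset\R^n$ is an $F_\s$-set with $|F|=2s$, $s>0$, then there is an $F_\s$-set $G\subset F$ with $|G|=s$ such that $\mes_{n-1}\Pi_j(G)\le\mes_{n-1}\Pi_j(A)$ for every $F_\s$-set $A\subset F$ with $|A|\ge s$.} Applying this with $F=E_{j-1}$, $2s=2^{-(j-1)}t$, $G=E_j$, successively for $j=1,\dots,n$, yields the lemma.

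For the construction, let $g(\widehat x_j)=\mes_1 F(\widehat x_j)$ be the length of the $j$-th section of $F$; by Fubini's theorem $g$ is measurable and $\int_{\R^{n-1}}g=2s$. Put $\Phi(\l)=\int_{\{g>\l\}}g\,d\widehat x_j$ for $\l\ge 0$; this function is nonincreasing, right-continuous (monotone convergence), $\Phi(0)=2s$, and $\Phi(\l)\to 0$ as $\l\to\infty$ (dominated convergence), while its only jumps occur at points $\l$ with $\mes_{n-1}\{g=\l\}>0$, where $\Phi(\l^-)-\Phi(\l)=\l\,\mes_{n-1}\{g=\l\}$ (here $\Phi(\l^-)$ is the left-hand limit). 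Let $a=\sup\{\l\ge 0:\Phi(\l)\ge s\}$. Since $\Phi$ is right-continuous with $\Phi(0)=2s>s$ and $\Phi(\l)\to 0$ at infinity, $0<a<\infty$, and by monotonicity $\Phi(a)\le s\le\Phi(a^-)$; hence $\rho:=s-\Phi(a)$ satisfies $0\le\rho\le a\,\mes_{n-1}\{g=a\}$. (The sets $\{g>a\}$ and $\{g=a\}$ have finite $(n-1)$-measure, since $g>a>0$ there and $\int g<\infty$.) Using inner regularity of Lebesgue measure, pick an $F_\s$-set $S\subset\{g>a\}$ with $\mes_{n-1}(\{g>a\}\setminus S)=0$ and an $F_\s$-set $B\subset\{g=a\}$ with $\mes_{n-1}B=\rho/a$, and set
\[
G=F\cap\bigl(\R\times(S\cup B)\bigr).
\]
This set is $F_\s$ (a product of $\sigma$-compact sets is $\sigma$-compact, and a finite intersection of $F_\s$-sets is $F_\s$). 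Since the $j$-th sections of $F$ have length $>a$ over $S$ and length $=a$ over $B$, one gets $|G|=\int_{\{g>a\}}g+a\,\mes_{n-1}B=\Phi(a)+\rho=s$, while $\Pi_j(G)=S\cup B$ (a disjoint union), so that $\mes_{n-1}\Pi_j(G)=\mes_{n-1}\{g>a\}+\rho/a$.

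To verify minimality, let $A\subset F$ be $F_\s$ with $|A|\ge s$; we may assume $\mes_{n-1}\Pi_j(A)<\infty$. Write $h(\widehat x_j)=\mes_1 A(\widehat x_j)\le g(\widehat x_j)$ and $P=\{h>0\}\subset\Pi_j(A)$, so $\mes_{n-1}P<\infty$ and $\int_P g\ge\int_{\R^{n-1}}h=|A|\ge s$. Splitting $P$ at the level $a$ and using $g\le a$ on $\{g\le a\}$ and $g>a$ on $\{g>a\}$ gives
\[
s\le\int_P g\le\int_{P\cap\{g>a\}}g+a\,\mes_{n-1}\bigl(P\cap\{g\le a\}\bigr),
\]
\[
s-\rho=\Phi(a)=\int_{\{g>a\}}g\ge\int_{P\cap\{g>a\}}g+a\,\mes_{n-1}\bigl(\{g>a\}\setminus P\bigr).
\]
Subtracting and dividing by $a>0$ yields $\mes_{n-1}(P\cap\{g\le a\})\ge\rho/a+\mes_{n-1}(\{g>a\}\setminus P)$, and hence
\[
\mes_{n-1}\Pi_j(A)\ge\mes_{n-1}P\ge\mes_{n-1}(P\cap\{g>a\})+\mes_{n-1}(\{g>a\}\setminus P)+\rho/a=\mes_{n-1}\{g>a\}+\rho/a=\mes_{n-1}\Pi_j(G),
\]
which is the required inequality. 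The genuine content of the proof is this last ``concentration of mass'' estimate; the remaining points — measurability of $g$, the fact that $a>0$ (forced by right-continuity of $\Phi$ at the origin together with $s>0$), the absorption of the possible atom of $\Phi$ at $\l=a$ by the auxiliary set $B$, and the choice of all auxiliary sets within the class $F_\s$ — are routine, though they carry the bulk of the bookkeeping.
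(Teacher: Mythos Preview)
Your proof is correct and takes essentially the same approach as the paper: both construct $E_j$ as the part of $E_{j-1}$ lying over the set where the section-length function $g(\widehat x_j)=\mes_1 E_{j-1}(\widehat x_j)$ is largest, and then verify minimality of the projection by comparing mass above and below the threshold level. The only cosmetic differences are that the paper parametrizes the threshold via the rearrangement $\s_j^*$ (choosing $\mu$ with $\int_0^\mu \s_j^*=2^{-j}t$) whereas you parametrize via the level $a$ of $g$ through the function $\Phi(\l)=\int_{\{g>\l\}}g$, and that the paper proves (\ref{project}) by contradiction while you give a direct chain of inequalities.
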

\begin{proof} For $y\in \R^{n-1},$  set $\s_1(y)=\mes_1 E(y)$ if $y\in \Pi_1(E)$ and
$\s_1(y)=0$ otherwise. We have
$$
t=|E|=\int_{\R^{n-1}}\s_1(y)\,dy=\int_0^\infty \s_1^*(u)\,du.
$$
There exists $0<\mu<\infty$ such that
$$
\int_0^\mu \s_1^*(u)\,du=t/2.
$$
Clearly, $\s_1^*(\mu)>0.$ Let
$$
P=\{y\in \Pi_1(E):\s_1(y)\ge\s_1^*(\mu)\}.
$$
Then $\mes_{n-1}P\ge \mu.$ Besides,
$$
\mes_{n-1}\{y\in \Pi_1(E):\s_1(y)>\s_1^*(\mu)\}\le \mu.
$$
Therefore there exists a measurable set  $P_1\subset P$   such that
$\mes_{n-1} P_1=\mu$
and $\s_1(y)\le\s_1^*(\mu)$ for  all $y\not\in P_1.$ Further, let
$$
E_1=\{x=(x_1,\widehat x_1)\in E: \widehat x_1\in P_1\}.
$$
Then $\Pi_1(E_1)=P_1$ and
$$
|E_1|=\int_{P_1}\s_1(y)\,dy=\int_0^\mu \s_1^*(u)\,du=t/2.
$$
Let $A\subset E$ be a set of type $F_\s$ with $|A|\ge t/2$ and let $Q=\Pi_1(A).$ We state that
$$
\mes_{n-1} Q\ge \mes_{n-1} P_1.
$$
Indeed, otherwise we would have that
$$
\mes_{n-1}(P_1\setminus Q)>\mes_{n-1}(Q\setminus P_1).
$$
But $\s_1(y)\le \s_1^*(\mu)$ for all $y\in Q\setminus P_1$ and $\s_1(y)\ge \s_1^*(\mu)$ for  all $y\in P_1.$
Thus,
$$
\int_{P_1\setminus Q}\s_1(y)\,dy>\int_{Q\setminus P_1}\s_1(y)\,dy.
$$
This implies that
$$
\begin{aligned}
&|A|=\int_Q\mes_1A(y)\,dy\le \int_Q\s_1(y)\,dy=\int_{Q\cap P_1}\s_1(y)\,dy+\int_{Q\setminus P_1}\s_1(y)\,dy\\
&<\int_{Q\cap P_1}\s_1(y)\,dy+\int_{P_1\setminus Q}\s_1(y)\,dy=\int_{P_1}\s_1(y)\,dy=t/2,
\end{aligned}
$$
which contradicts the condition $|A|\ge t/2$.

Next, we consider the projection $\Pi_2(E_1)$ and $\widehat x_2$--sections of $E_1$, and apply the same reasonings to obtain the set $E_2.$
Continuing this process and using induction, we obtain the sets (\ref{project0})
satisfying  (\ref{project}).

\end{proof}

A very useful control of projections is provided by
the following Loomis--Whitney lemma \cite{LW}.
\begin{lem}\label{Loomis} Let $E\subset \R^n$ be a set of type $F_\s.$
Then
\begin{equation}\label{lw}
(\mes_nE)^{n-1}\le \prod_{k=1}^n\mes_{n-1}\Pi_k(E).
\end{equation}
\end{lem}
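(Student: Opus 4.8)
The strategy is an induction on the dimension $n$: I slice $E$ by the hyperplanes $\{x_n=c\}$, apply the $(n-1)$-dimensional case to the slices, and then recombine the slicewise estimates by means of the generalized H\"older inequality. For $n=1$ the statement is trivial, and for $n=2$ it is immediate, since a set $E\subset\R^2$ is contained in the rectangle $\Pi_2(E)\times\Pi_1(E)$, whence $\mes_2 E\le\mes_1\Pi_1(E)\,\mes_1\Pi_2(E)$, which is \eqref{lw} for $n=2$.

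Assume now that \eqref{lw} holds in dimension $n-1\ge2$, where both sides are allowed to equal $+\infty$. Write $x=(x_n,\widehat x_n)$ with $\widehat x_n=(x_1,\dots,x_{n-1})$, and for $x_n\in\R$ set
\[
E_{x_n}=\{\widehat x_n\in\R^{n-1}:(x_n,\widehat x_n)\in E\}.
\]
Since $E$ is of type $F_\sigma$, each slice $E_{x_n}$ is an $F_\sigma$-set in $\R^{n-1}$, the function $x_n\mapsto\mes_{n-1}E_{x_n}$ is measurable, and by Tonelli's theorem $\mes_n E=\int_\R\mes_{n-1}E_{x_n}\,dx_n$. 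For $k=1,\dots,n-1$ let $\Pi_k'$ denote the projection of $\R^{n-1}$ onto the hyperplane $\{x_k=0\}$ obtained by deleting the $x_k$-coordinate. Unravelling the definitions, the $x_n$-section of $\Pi_k(E)\subset\R^{n-1}$ is precisely $\Pi_k'(E_{x_n})$: a point $(x_1,\dots,\widehat{x}_k,\dots,x_{n-1})$ belongs to that section iff some value of $x_k$ puts $(x_1,\dots,x_k,\dots,x_{n-1},x_n)$ in $E$, that is, iff $(x_1,\dots,x_k,\dots,x_{n-1})\in E_{x_n}$ for some $x_k$. Hence, again by Tonelli's theorem,
\[
\int_\R\mes_{n-2}\Pi_k'(E_{x_n})\,dx_n=\mes_{n-1}\Pi_k(E)\qquad(k=1,\dots,n-1).
\]
Moreover $E_{x_n}\subset\Pi_n(E)$ for every $x_n$, so $\mes_{n-1}E_{x_n}\le\mes_{n-1}\Pi_n(E)$.

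The core of the argument is then the following chain. Applying the inductive hypothesis to $E_{x_n}$ gives $(\mes_{n-1}E_{x_n})^{n-2}\le\prod_{k=1}^{n-1}\mes_{n-2}\Pi_k'(E_{x_n})$. Splitting the factor as $\mes_{n-1}E_{x_n}=(\mes_{n-1}E_{x_n})^{1/(n-1)}(\mes_{n-1}E_{x_n})^{(n-2)/(n-1)}$, bounding the first factor by $(\mes_{n-1}\Pi_n(E))^{1/(n-1)}$ and the second by the inductive hypothesis, we obtain
\[
\mes_{n-1}E_{x_n}\le(\mes_{n-1}\Pi_n(E))^{1/(n-1)}\prod_{k=1}^{n-1}\bigl(\mes_{n-2}\Pi_k'(E_{x_n})\bigr)^{1/(n-1)}.
\]
Integrating in $x_n$, pulling out the constant factor, and applying the generalized H\"older inequality
\[
\int_\R\prod_{k=1}^{n-1}g_k(x_n)^{1/(n-1)}\,dx_n\le\prod_{k=1}^{n-1}\Bigl(\int_\R g_k(x_n)\,dx_n\Bigr)^{1/(n-1)}
\]
with $g_k(x_n)=\mes_{n-2}\Pi_k'(E_{x_n})$, and finally inserting the two displayed identities, we reach $\mes_n E\le\prod_{k=1}^n(\mes_{n-1}\Pi_k(E))^{1/(n-1)}$. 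Raising this to the power $n-1$ gives \eqref{lw} and completes the induction.

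I expect the only genuinely delicate points to be bookkeeping ones: keeping track of which coordinate hyperplane $\Pi_k$ and $\Pi_k'$ project onto after taking a slice, verifying that the section of a projection is the projection of the section, and applying H\"older with exactly $n-1$ equal exponents all equal to $n-1$. Measurability of all the sets and functions involved is automatic from the $F_\sigma$ hypothesis, as already observed before the statement, and the case where some $\mes_{n-1}\Pi_k(E)=+\infty$ is trivial (alternatively one first proves the inequality for $E\cap B_R$ and lets $R\to\infty$ via monotone convergence). So there is no substantial analytic obstacle beyond this combinatorial care.
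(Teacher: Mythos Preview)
Your argument is correct: this is the standard inductive proof of the Loomis--Whitney inequality, and all the steps (the identification of the $x_n$-section of $\Pi_k(E)$ with $\Pi_k'(E_{x_n})$, the use of the inductive hypothesis on slices, and the application of the generalized H\"older inequality with $n-1$ equal exponents) are carried out accurately. Note that the paper itself does not prove this lemma at all; it merely states it and cites the original paper of Loomis and Whitney \cite{LW}, so there is no ``paper's own proof'' to compare against---your write-up simply supplies the classical argument that the paper takes for granted.
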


\vskip 6pt

\section{Rearrangement estimates}

In this section  we prove the following anisotropic version of the inequality (\ref{estim6}).

\begin{teo}\label{Huvud} Let $f\in L^p(\R^n),\,\, 1\le p<\infty,\,\, n\ge 2$. There exist positive functions $u_j$ ($j=1\dots n)$ on $\R_+$ such that
\begin{equation}\label{huvud1}
\prod_{j=1}^n u_j(t)\le t\quad\mbox{for all}\quad t>0
\end{equation}
and for any $j=1,\dots n$ and any $h>0$
\begin{equation}\label{huvud11}
\int_{\Omega_j(h)}\frac{[f^*(t)-f^*(2t)]^p}{u_j(t)^p}\,dt\le c\left(\frac{\o_j(f;h)_p}{h}\right)^p,
\end{equation}
where $\Omega_j(h)=\{t>0: u_j(t)\ge h\}.$ Besides,
\begin{equation}\label{huvud111}
\sup_{t\in \Omega_j(h)}t^{1/p}[f^*(t)-f^*(2t)]/u_j(t)\le c \o_j(f;h)_p/h.
\end{equation}
\end{teo}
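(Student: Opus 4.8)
The plan is to construct the functions $u_j$ from the nested family of sets produced by Lemma~\ref{projections}, applied at a dyadic scale of level sets of $f^*$. Fix $f\in L^p(\R^n)$. For $k\in\Z$ let $E_k=\{x:|f(x)|>f^*(2^k)\}$ (taken as an $F_\s$ set), so that $|E_k|\le 2^k$ and, roughly, $E_k$ has measure comparable to $2^k$ at Lebesgue points of $f^*$. To each $E_k$ apply Lemma~\ref{projections}: this yields subsets $E_k=E_k^{(0)}\supset E_k^{(1)}\supset\cdots\supset E_k^{(n)}$ of type $F_\s$ with $|E_k^{(j)}|=2^{-j}|E_k|$ and the projection–minimality property~(\ref{project}). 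I would then define, for each $j$, the function $u_j$ on each dyadic block $(2^{k-1},2^k]$ to be (a suitably monotonised version of) $\mes_{n-1}\Pi_j(E_k^{(j)})$ normalised so that $u_j$ is positive. The Loomis–Whitney inequality (Lemma~\ref{Loomis}) applied to $E_k^{(1)}\cap\cdots$, together with the fact that $\prod_j\mes_{n-1}\Pi_j$ controls $\mes_n$ from below, is exactly what gives the product bound $\prod_{j=1}^n u_j(t)\le t$ in~(\ref{huvud1}): the Loomis–Whitney lemma says $(\mes_n E)^{n-1}\le\prod_k\mes_{n-1}\Pi_k(E)$, and feeding in the sets $E_k^{(j)}$ (whose measures are $2^{-j}$ times that of $E_k$) produces the clean bound $t$ after summing the exponents.

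Next, for the main estimate~(\ref{huvud11}), fix $j$ and $h>0$ and consider the dyadic indices $k$ for which $u_j(2^k)\ge h$, i.e. $2^k\in\Omega_j(h)$ up to constants. On each such block the quantity $[f^*(t)-f^*(2t)]^p$ is controlled by the $p$-th power of the oscillation of $|f|$ between the level sets $f^*(2^{k})$ and $f^*(2^{k+1})$. The key point is a "slab" argument: the set $E_k^{(j)}\setminus E_{k+1}$ (where $|f|$ lies between the two consecutive rearrangement values) has $(n-1)$-dimensional projection in the $x_j$-direction of measure $\gtrsim u_j(2^k)$ by the minimality property~(\ref{project}), hence by Fubini in the $x_j$-variable the one-dimensional sections of this slab, paired with the translation $x\mapsto x+he_j$, contribute
\[
\int_{\R^{n-1}}\!\int_{\R}|f(x_j,\widehat x_j)-f(x_j+h,\widehat x_j)|^p\,dx_j\,d\widehat x_j\;\gtrsim\;u_j(2^k)\cdot\frac{[f^*(2^k)-f^*(2^{k+1})]^p}{?},
\]
where the difference survives because on a set of horizontal extent $\gtrsim$ something one can find a pair of points, distance $\le h$ apart in direction $e_j$, straddling the level — but only when $h$ is small relative to that extent, which is precisely the role of the restriction $u_j(t)\ge h$. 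Summing over the admissible $k$ and using~(\ref{int}) (or the Steklov-mean estimates~(\ref{steklov1})–(\ref{steklov2})) to pass from the averaged increments $I_j(f;h)_p$ to $\o_j(f;h)_p$ yields~(\ref{huvud11}). The weak-type bound~(\ref{huvud111}) follows from the same per-block estimate without summation: a single term gives $2^{k/p}[f^*(2^k)-f^*(2^{k+1})]/u_j(2^k)\lesssim \o_j(f;h)_p/h$.

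The main obstacle I anticipate is the geometric heart of the slab argument: making rigorous the claim that on a set whose $x_j$-projection is large one can locate two points at $x_j$-distance $\le h$ on which $|f|$ jumps by the full amount $f^*(2^k)-f^*(2^{k+1})$, while keeping track of the dependence on $h$ so that the restriction to $\Omega_j(h)$ is used correctly and no logarithmic losses appear. This is where Lemma~\ref{projections} must be used with care — the subsets $E_k^{(j)}$ were chosen to have minimal $j$-projection among large subsets of $E_k^{(j-1)}$, and that minimality is what forces the complementary slabs to have projections comparable to $u_j$ rather than merely bounded by it. A secondary technical point is the monotonisation of the $u_j$ (they need not come out monotone, but $\Omega_j(h)$ and the definition work best if one replaces each $u_j$ by a comparable monotone function, using an argument like~(\ref{d4}) for moduli of continuity), and checking that this monotonisation does not destroy the product inequality~(\ref{huvud1}); I would handle this by defining $u_j(t)$ as an infimum of the block values over $t'\ge t$ and verifying the product bound block-by-block.
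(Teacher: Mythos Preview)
Your overall architecture --- apply Lemma~\ref{projections} to level-set annuli, read off the functions $u_j$ from the resulting projections, and use Loomis--Whitney for~(\ref{huvud1}) --- matches the paper. But two essential ingredients are missing, and without them the ``slab argument'' cannot be completed.

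First, you work with a general $f\in L^p(\R^n)$ and hope to find, on many $\widehat x_j$-lines, pairs of points at $e_j$-distance $\le h$ straddling two consecutive levels. For an arbitrary $f$ there is no reason such pairs exist; a level set need not meet a given line in an interval, and the intersection with the next level set may lie on a completely different line. The paper removes this difficulty at the outset by replacing $f$ with an iterated rearrangement $\mathcal R_\sigma f$, invoking Lemma~\ref{MODULI} to keep all partial moduli $\o_j(\,\cdot\,;h)_p$ comparable. After this reduction $f$ is nonnegative on $\R_+^n$ and strictly decreasing in each variable, so for any $x$ in the annulus $E_t\setminus E_{t/2}$ one can slide along the ray $x_j'\mapsto (x_j',\widehat x_j)$ with $x_j'>x_j$ and \emph{certainly} land in $E_{3t}\setminus E_{2t}$; this gives the pointwise bound $f^*(t)-f^*(2t)\le f(x)-f(x')$ on every line through the slab. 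Your proposal does not mention this reduction, and the ``?'' in your display is exactly the place where the argument stalls without it.

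Second, the passage from $f(x)-f(x')$ to $\o_j(f;h)_p/h$ is not carried by the ``two points at distance $\le h$'' heuristic. The paper instead splits via the Steklov mean: $f=\varphi_h+f_h$ with $\|\varphi_h\|_p\le\o_j(f;h)_p$ and $\|\partial_j f_h\|_p\le\o_j(f;h)_p/h$. The $f_h$-part is bounded by $\int_{S_t(\widehat x_j)}|\partial_j f_h|$ along the section and then integrated over the annulus; the $\varphi_h$-part is handled by restricting to the subset $H_{t,j}\subset G_{t,j-1}$ where $\varphi_h\le\varphi_h^*(2^{-j-1}t)$, whose projection is still $\gtrsim\mu_j(t)$ by the minimality clause in Lemma~\ref{projections}. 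The restriction $t\in\Omega_j(h)$ is used only to bound $\varphi_h^*(2^{-j-1}t)/u_j(t)\le\varphi_h^*(2^{-j-1}t)/h$, after which a single $L^p$-norm of $\varphi_h$ suffices. Two smaller points: the correct definition is $u_j(t)=t/\mu_j(t)$ with $\mu_j(t)\asymp\mes_{n-1}\Pi_j(G_{t,j})$, not $u_j=\mes_{n-1}\Pi_j$ itself (Loomis--Whitney gives $\prod_j\mu_j\ge ct^{n-1}$, whence $\prod_j u_j\le t$); and no monotonisation of the $u_j$ is needed --- the set $\Omega_j(h)=\{t:u_j(t)\ge h\}$ is used as is.
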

\begin{proof} By virtue of (\ref{moduli}), we may replace $f$ by one of its iterated rearrangements.
Furthermore, we
 may assume that $f$ is a positive continuous function on $\R_+^n$, strictly decreasing in each variable $x_k\in\R_+$.  In particular, our conditions imply that for any fixed $\widehat x_j\in \R^{n-1}_+$
\begin{equation}\label{huvud00}
\lim_{u\to+\infty} f(u, \widehat x_j)=0.
\end{equation}

Set for $t>0$
$$
E_t=\{x\in\R^n_+: f(x)>f^*(t)\}, \quad G_t=E_t\setminus E_{t/2}.
$$
Then $|G_t|=t/2.$ By Lemma \ref{projections},  there exist subsets
$$
G_t\equiv G_{t,0}\supset G_{t,1}\supset G_{t,2}\supset\cdots\supset G_{t,n}
$$
of type $F_\s$ such that $|G_{t,j}|=2^{-j-1}t$ and for any $j=1,\dots, n$
and any set $A\subset G_{t,j-1}$ of type $F_\s$ with $|A|\ge 2^{-j-1}t$ we have that
\begin{equation}\label{huvud2}
\mes_{n-1}\Pi_j(G_{t,j})\le \mes_{n-1}\Pi_j(A).
\end{equation}
Set
\begin{equation}\label{huvud22}
\mu_j(t)=2^{(n^2-1)/n}\mes_{n-1}\Pi_j(G_{t,j}).
\end{equation}
Applying the Loomis--Whitney inequality (\ref{lw}) to the set $G_{t,n}$, we have
$$
\prod_{j=1}^n \mes_{n-1}\Pi_j(G_{t,n})\ge |G_{t,n}|^{n-1}= \left(2^{-n-1}t\right)^{n-1}.
$$
Since $G_{t,n}\subset G_{t,j},$
then
$\mes_{n-1}\Pi_j(G_{t,n})\le\mes_{n-1}\Pi_j(G_{t,j})$. This  implies that
\begin{equation}\label{huvud3}
\prod_{j=1}^n \mu_j(t)\ge t^{n-1},\quad t>0.
\end{equation}
Set now
$$
u_j(t)=t/\mu_j(t)\quad(j=1,\dots, n).
$$
Then, by (\ref{huvud3}),
$$
\prod_{j=1}^n u_j(t)= t^n\left(\prod_{j=1}^n \mu_j(t)\right)^{-1}\le t\quad\mbox{for any}\quad t>0.
$$
Thus, the functions $u_j$ satisfy  (\ref{huvud1}).

Fix $h>0$ and $j\in\{1,\dots, n\}$.  Set
$$
f_h(y)=\frac1h\int_0^h f(y_j+u,\widehat y_j)\,du, \quad y=(y_j,\widehat y_j)\in \R^n_+.
$$
Since for a fixed $\widehat y_j$ the function $f(\cdot, \widehat y_j)$ decreases on $\R_+$,  we have that
\begin{equation}\label{huvud5}
f(y)\ge f_h(y)\quad\mbox{for any}\quad y\in \R^n_+.
\end{equation}
Let $\f_h=f-f_h.$ By (\ref{huvud5}), $\f_h\ge 0.$
Fix $t\in\Omega_j(h).$ Denote
$$
H_{t,j}=\{x\in G_{t,j-1}:\f_h(x)\le \f_h^*(2^{-j-1}t)\}.
$$
Then $|H_{t,j}|\ge 2^{-j-1}t$,  since $|G_{t,j-1}|=2^{-j}t$ and
$$
|\{x\in \R^n_+:\f_h(x)> \f_h^*(2^{-j-1}t)\}|=2^{-j-1}t.
$$
Thus, by (\ref{huvud2}) and (\ref{huvud22}), we have that
\begin{equation}\label{huvud7}
\mes_{n-1}\Pi_j(H_{t,j})\ge 2^{-(n^2-1)/n}\mu_j(t).
\end{equation}

 Fix $x=(x_j,\widehat x_j)\in H_{t,j}$. Since $H_{t,j}\subset G_t,$
 we have that $$f^*(t)<f(x)\le f^*(t/2).$$
 It follows from (\ref{huvud00}) that there exists $x_j'>x_j$ such that $x'=(x_j',\widehat x_j)\in
E_{3t}\setminus E_{2t},$ that is, $f^*(3t)<f(x')\le f^*(2t)$. Thus, taking into account (\ref{huvud5}), we obtain that
\begin{equation}\label{huvud4}
f^*(t)-f^*(2t)\le f(x)-f(x')\le \f_h(x)+f_h(x)-f_h(x').
\end{equation}
Since $x\in H_{t,j}$, we have that
$
0\le\f_h(x)\le \f_h^*(2^{-j-1}t).
$
Further, let
$S_t(\widehat x_j)$ denote the linear $\widehat x_j-$ section of the set $E_{3t}\setminus E_{t/2}.$
The function $f(\cdot,\widehat x_j)$ decreases on $\R_+.$
Therefore $[x_j, x_j']\subset S_t(\widehat x_j)$ and
$$
0\le f_h(x)-f_h(x')\le \int_{S_t(\widehat x_j)}\left|\frac{\partial f_h}{\partial x_j}(u, \widehat x_j)\right|\,du.
$$
Applying (\ref{huvud4}), we obtain that for any $\widehat x_j\in \Pi_j(H_{t,j})$
\begin{equation}\label{huvud44}
f^*(t)-f^*(2t)\le \f_h^*(2^{-j-1}t)+\int_{S_t(\widehat x_j)}\left|\frac{\partial f_h}{\partial x_j}(u, \widehat x_j)\right|\,du.
\end{equation}
Integrating inequality (\ref{huvud44}) with respect to $\widehat x_j$ over $\Pi_j(H_{t,j})$, we have
$$
\begin{aligned}
&f^*(t)-f^*(2t)\le \f_h^*(2^{-j-1}t)\\
&+\frac{1}{\mes_{n-1}\Pi_j(H_{t,j})}\int_{\Pi_j(H_{t,j})}\int_{S_t(\widehat x_j)}\left|\frac{\partial f_h}{\partial x_j}(u, \widehat x_j)\right|\,du\,d\widehat x_j.
\end{aligned}
$$
Observe that $H_{t,j}\subset G_t\subset E_{3t}\setminus E_{t/2}$ and therefore $$\Pi_j(H_{t,j})\subset \Pi_j(E_{3t}\setminus E_{t/2}).$$
Hence, using (\ref{huvud7}), we obtain
\begin{equation}\label{huvud77}
f^*(t)-f^*(2t)\le \f_h^*(2^{-j-1}t)+\frac{c}{\mu_j(t)}\int_{E_{3t}\setminus E_{t/2}}\left|\frac{\partial f_h}{\partial x_j}(x)\right|\,dx.
\end{equation}
Since $u_j(t)\ge h$ for all $t\in\Omega_j(h)$,
 estimate (\ref{huvud77}) implies that
$$
\begin{aligned}
&\left(\int_{\Omega_j(h)}\left(\frac{\mu_j(t)}{t}\right)^p[f^*(t)-f^*(2t)]^p\,dt\right)^{1/p}\le \frac1h\left(\int_0^\infty
\f_h^*(2^{-j-1}t)^p\,dt\right)^{1/p}\\
&+c\left(\int_0^\infty\left(\int_{E_{3t}\setminus E_{t/2}}\left|\frac{\partial f_h}{\partial x_j}(x)\right|\,dx\right)^p
\frac{dt}{t^p}\right)^{1/p}=J_1(h)+J_2(h).
\end{aligned}
$$
First, by (\ref{steklov1}),
\begin{equation}\label{huvud8}
J_1(h)\le \frac{2^{(j+1)/p}}{h}||\f_h||_p\le 2^{(j+1)/p}\frac{\o_j(f;h)_p}{h}.
\end{equation}
Further, by H\"older's inequality,
$$
J_2(h)^p\le c \int_0^\infty \int_{E_{3t}\setminus E_{t/2}}\left|\frac{\partial f_h}{\partial x_j}(x)\right|^p\,dx\,
\frac{dt}{t}.
$$
Set
$$
\Phi(u)=\int_{E_u}\left|\frac{\partial f_h}{\partial x_j}(x)\right|^p\,dx, \quad u>0.
$$
Then we have
$$
\begin{aligned}
J_2(h)^p\le &c\int_0^\infty[\Phi(3t)-\Phi(t/2)]\,\frac{dt}{t}=c\int_0^\infty\int_{t/2}^{3t}\Phi'(u)\,du\,\frac{dt}{t}\\
&\le c'\int_0^\infty \Phi'(u)\,du\le c'\int_{\R^n_+}\left|\frac{\partial f_h}{\partial x_j}(x)\right|^p\,dx.
\end{aligned}
$$
Applying  inequality (\ref{steklov2}), we get
$$
J_2(h)\le c \frac{\o_j(f;h)_p}{h}.
$$
Together with (\ref{huvud8}), this gives estimate (\ref{huvud11}).

Finally, by (\ref{huvud77}) and  H\"older's inequality, for any $t\in \Omega_j(h)$
$$
\begin{aligned}
&t^{1/p}[f^*(t)-f^*(2t)]/u_j(t)\\
&\le t^{1/p}\f_h^*(2^{-j-1}t)/u_j(t)
+ct^{1/p-1}\int_{E_{3t}\setminus E_{t/2}}\left|\frac{\partial f_h}{\partial x_j}(x)\right|\,dx\\
&\le \frac{2^{(j+1)/p}}{h}||\f_h||_p+c\left\|\frac{\partial f_h}{\partial x_j}(x)\right\|_p.
\end{aligned}
$$
Applying  (\ref{steklov1}) and  (\ref{steklov2}), we obtain (\ref{huvud111}).

We stress that the constants in (\ref{huvud11}) and (\ref{huvud111}) depend  on $p$ and $n$ only.

\end{proof}

\section{Limiting embeddings}

Applying Theorem \ref{Huvud}, we prove inequality (\ref{main2}).

\begin{teo}\label{MAIN_2} Let $1\le p<\infty, ~~p\le \t_j\le
\infty,$ and $0<\beta_j<1$ for all $j=1,...,n$ $(n\ge 2).$ Set
\begin{equation}\label{parameter}
\beta=n\left(\sum_{j=1}^n\frac1{\beta_j}\right)^{-1}, \quad
\t=\frac{n}{\beta}\left(\sum_{j=1}^n\frac1{\beta_j\t_j}\right)^{-1}.
\end{equation}
Assume that $1\le p<n/\beta.$  Let $q=np/(n-\beta p).$ Then there exists a constant $c>0$ such that for
any function
$$
f\in \bigcap_{j=1}^n B_{p,\t_j;j}^{\beta_j}(\R^n)
$$
we have that
\begin{equation}\label{main12}
||f||_{q,\t}\le c
\prod_{j=1}^n\left[(1-\beta_j)^{1/\t_j}||f||_{b_{p,\t_j;j}^{\beta_j}}\right]^{\beta/(n\beta_j)}.
\end{equation}
\end{teo}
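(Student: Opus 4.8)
We outline the argument I would use; set $a_j=\b/(n\b_j)$, so that $\sum_j a_j=1$, $a_j\b_j=\b/n$, and $c_j:=\t a_j/\t_j$ satisfies $\sum_j c_j=1$ by the definition of $\t$. First I would reduce to the function $g(t):=f^*(t)-f^*(2t)\ge 0$. Since $f\in L^p$ forces $f^*(u)\to0$, one has $f^*(t)=\sum_{k\ge0}g(2^kt)$, and since $1/\t=\frac\b n\sum_j1/(\b_j\t_j)\le\frac\b n\sum_j1/(\b_jp)=1/p$, Minkowski's inequality in $L^\t(dt/t)$ gives
\[
\|f\|_{q,\t}\le\Bigl(\sum_{k\ge0}2^{-k/q}\Bigr)\Bigl(\int_0^\infty t^{\t/q-1}g(t)^\t\,dt\Bigr)^{1/\t}
\]
(with the obvious $\sup$ version if $\t=\infty$), so it suffices to estimate $\int_0^\infty t^{\t/q-1}g(t)^\t\,dt$. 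Then I would apply Theorem \ref{Huvud} to $f$, obtaining weights $u_j>0$ with $\prod_ju_j(t)\le t$ and the two estimates \eqref{huvud11} and \eqref{huvud111}, valid for every $h>0$.

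The heart of the proof is a one--directional weighted estimate: for each $j$ with $\t_j<\infty$,
\[
\int_0^\infty g(t)^{\t_j}\,t^{\t_j/p-1}\,u_j(t)^{-\t_j\b_j}\,dt\ \le\ c\,\t_j(1-\b_j)\,\|f\|^{\t_j}_{b^{\b_j}_{p,\t_j;j}}.
\]
To prove it I would factor the integrand as $\dfrac{g(t)^p}{u_j(t)^p}\left(\dfrac{t^{1/p}g(t)}{u_j(t)}\right)^{\t_j-p}u_j(t)^{\t_j(1-\b_j)}$ (here $\t_j-p\ge0$ and $\t_j(1-\b_j)>0$ are used), write $u_j(t)^{\t_j(1-\b_j)}=\t_j(1-\b_j)\int_0^{u_j(t)}s^{\t_j(1-\b_j)-1}\,ds$, and apply Tonelli's theorem; the resulting inner integral over $\{t:u_j(t)>s\}$ is bounded by H\"older's inequality together with \eqref{huvud111} (for the $(\t_j-p)$-th power) and \eqref{huvud11} (for the $L^p$ part), both taken at $h=s$, by $c\,(\o_j(f;s)_p/s)^{\t_j}$; the surviving $s$-integral then equals $\t_j(1-\b_j)\int_0^\infty s^{-\t_j\b_j-1}\o_j(f;s)_p^{\t_j}ds=\t_j(1-\b_j)\|f\|^{\t_j}_{b^{\b_j}_{p,\t_j;j}}$. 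The sharp factor $1-\b_j$ appears precisely as the constant generated by the integrability at $s\to0+$ of $\int_0 s^{\t_j(1-\b_j)-1}\,ds$. For the indices with $\t_j=\infty$ the corresponding statement is $t^{1/p}g(t)\,u_j(t)^{-\b_j}\le c\,\|f\|_{b^{\b_j}_{p,\infty;j}}$, which follows at once from \eqref{huvud111} with $h=u_j(t)$.

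For the assembly, using $\prod_ju_j(t)\le t$ and $\t/q-1=\t/p-1-\t\b/n$ with $\t\b/n=\t a_j\b_j$ for each $j$, one has pointwise
\[
t^{\t/q-1}g(t)^\t\le g(t)^\t t^{\t/p-1}\prod_ju_j(t)^{-\t a_j\b_j}=\prod_j\left[g(t)^{\t_j}t^{\t_j/p-1}u_j(t)^{-\t_j\b_j}\right]^{c_j},
\]
the last identity being a routine check of exponents from $c_j\t_j=\t a_j$ and $\sum_j a_j=\sum_j c_j=1$. H\"older's inequality with exponents $1/c_j$ and then the one--directional estimate give $\int_0^\infty t^{\t/q-1}g(t)^\t dt\le\prod_j\bigl(c\,\t_j(1-\b_j)\|f\|^{\t_j}_{b^{\b_j}_{p,\t_j;j}}\bigr)^{c_j}$; raising to the power $1/\t$ and using $c_j/\t=a_j/\t_j=\b/(n\b_j\t_j)$ turns the right side into $\bigl(\prod_j(c\,\t_j)^{a_j/\t_j}\bigr)\prod_j\bigl[(1-\b_j)^{1/\t_j}\|f\|_{b^{\b_j}_{p,\t_j;j}}\bigr]^{\b/(n\b_j)}$, and the prefactor is bounded in terms of $p,n$ only since $a_j<1$ and $\sup_{x\ge p}x^{1/x}<\infty$ (so the possibly large factor $\t_j$ is harmless). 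Combined with the reduction of the first paragraph this is \eqref{main12}. When some $\t_j=\infty$ one first peels those factors $u_j(t)^{-\t a_j\b_j}$ off the product by means of the $\sup$-estimate above (the powers of $u_j$ again cancel) and then applies H\"older over the remaining indices.

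I expect the one--directional estimate to be the only genuinely delicate step: it is there that the sharp constant $1-\b_j$ is created, through the interplay between the two inequalities of Theorem \ref{Huvud} and the layer--cake representation of $u_j(t)^{\t_j(1-\b_j)}$. The reduction to $g$ and the assembly are essentially algebraic bookkeeping with the exponents $\b,\t,a_j,c_j$, together with the routine (if slightly tedious) treatment of the cases $\t_j=\infty$.
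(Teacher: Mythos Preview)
Your proposal is correct and follows essentially the same route as the paper: the same H\"older splitting with exponents $r_k=1/c_k=n\beta_k\theta_k/(\theta\beta)$, the same layer--cake representation $u_k(t)^{\theta_k(1-\beta_k)}=\theta_k(1-\beta_k)\int_0^{u_k(t)}h^{\theta_k(1-\beta_k)-1}dh$ followed by Fubini, and the same combination of \eqref{huvud111} (for the $(\theta_k-p)$-th power) with \eqref{huvud11} (for the $L^p$ part) to extract $(\omega_k(f;h)_p/h)^{\theta_k}$ on $\Omega_k(h)$. The only cosmetic differences are that the paper reduces to $g=f^*-f^*(2\cdot)$ via the single Minkowski step $J\ge(1-2^{-1/q})\|f\|_{q,\theta}$ (assuming $\|f\|_{q,\theta}<\infty$ a priori) rather than your telescoping series, and handles $\theta_j=\infty$ by the limit \eqref{predel} rather than directly.
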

\begin{proof}
Set $I=||f||_{q,\t}$; we assume that $I<\infty.$  Also, we assume that all $\t_j<\infty$ (this assumption is justified by (\ref{predel})).

Denote
$$
J=\left(\int_0^\infty t^{\t/q-1}[f^*(t)-f^*(2t)]^\t\,dt\right)^{1/\t}.
$$
By Minkowski's inequality,
$$
J\ge I-\left(\int_0^\infty t^{\t/q-1}f^*(2t)^\t\,dt\right)^{1/\t}= (1-2^{-1/q})I.
$$
Thus, $I\le (1-2^{-1/q})^{-1}J$, and it is sufficient to estimate $J$. Denote $$\f(t)=f^*(t)-f^*(2t).$$

Let $u_j \,\,(j=1,\dots, n)$ be the functions defined in Theorem \ref{Huvud}. We have
$$
\frac{\t}{q}-1=\frac{\t}{p}-1-\frac{\t\b}{n}.
$$
Thus, by (\ref{huvud1}),
$$
J^\t=\int_0^\infty t^{\t/p-1}\f(t)^\t t^{-\t\b/n}\,dt
$$
\begin{equation}\label{teor0}
\le \int_0^\infty t^{\t/p-1}\f(t)^\t\left(\prod_{k=1}^n u_k(t)\right)^{-\t\b/n}\,dt.
\end{equation}
 Denote
$$
r_k=n\t_k\b_k/(\t\b),\quad g_k(t)=(\f(t)t^{1/p-1/\t_k})^{\t_k/r_k}u_k(t)^{-\t\b/n}.
$$
We have that
$$
\sum_{k=1}^n\frac1{r_k}=1\quad\mbox{and}\quad \sum_{k=1}^n\frac{\t_k}{r_k}=\t.
$$
Therefore inequality (\ref{teor0}) can be written as
$$
J^\t\le\int_0^\infty\prod_{k=1}^n g_k(t)         \,dt.
$$
Applying H\"older's inequality with the exponents $r_k$, we have
\begin{equation}\label{teor1}
J^\t\le\prod_{k=1}^nJ_k^\t, \quad J_k=\left(\int_0^\infty t^{\t_k/p-1}\f(t)^{\t_k} u_k(t)^{-\t_k\b_k}\,dt\right)^{1/(\t r_k)}.
\end{equation}
Further,
$$
u_k(t)^{\t_k(1-\b_k)}=\t_k(1-\b_k)\int_0^{u_k(t)} h^{\t_k(1-\b_k)-1}\,dh.
$$
Thus, applying Fubini's theorem, we have
$$
\begin{aligned}
J_k^{\t r_k}&=\t_k(1-\b_k)\int_0^\infty t^{\t_k/p-1}\left(\frac{\f(t)}{u_k(t)}\right)^{\t_k}\int_0^{u_k(t)} h^{\t_k(1-\b_k)-1}\,dh\,dt\\
&=\t_k(1-\b_k)\int_0^\infty h^{\t_k(1-\b_k)-1} \int_{\Omega_k(h)}t^{\t_k/p-1}\left(\frac{\f(t)}{u_k(t)}\right)^{\t_k}\,dt\,dh,
\end{aligned}
$$
where $\Omega_k(h)=\{t>0: u_k(t)\ge h\}.$

Applying (\ref{huvud111}) and (\ref{huvud11}), and taking into account that $\t_k\ge p,$ we obtain
$$
\begin{aligned}
&\int_{\Omega_k(h)}t^{\t_k/p-1}\left(\frac{\f(t)}{u_k(t)}\right)^{\t_k}\,dt\\
&\le c^{\t_k-p} \left(\frac{\o_k(f;h)_p}{h}\right)^{\t_k-p}\int_{\Omega_k(h)}\left(\frac{\f(t)}{u_k(t)}\right)^p\,dt\le c_1^{\t_k}\left(\frac{\o_k(f;h)_p}{h}\right)^{\t_k}
\end{aligned}
$$
(the constant $c_1$ depends only on $p$ and $n$). It follows that
$$
J_k^{\t r_k}\le c_1^{\t_k}\t_k(1-\b_k)\int_0^\infty h^{-\t_k\b_k}\o_k(f;h)_p^{\t_k}\,\frac{dh}{h}.
$$
Applying (\ref{teor1}), we obtain inequality (\ref{main12}) with the constant
$$
c=c_1/(1-2^{-1/q})\prod_{k=1}^n\t_k^{\b/(\t_kn\b_k)}\le 2c_1/(1-2^{-1/q})
$$
(we have used the inequality  $s^{1/s}<2,\,\, s\ge 1$).

We stress that the constant is bounded with respect to $\t_k.$ Due to (\ref{predel}), this enables us to drop the assumption
$\t_k<+\infty.$
\end{proof}

We observe that in \cite{K9} we obtained estimate of modified Lorentz norms (defined in terms of iterated rearrangements) with the same right-hand side as in (\ref{main12}). In the case $\t\le q$ this estimate is stronger than (\ref{main12}); however, for
$\t>q$ the relation is opposite. We shall discuss these results in Section  5 below.

Note that the use of Theorem \ref{MAIN_2} together with Lemma \ref{K-L} enables us to derive some limiting embeddings for Lipschitz classes. We give a short description of this result.

Assume that
$0<\a\le 1$,  $1\le
p<\infty$, and $1\le k\le n.$ Denote by $\Lambda_{p;k}^\a(\R^n)$
the class of all functions $f\in L^p(\R^n)$ such that
\begin{equation*}
||f||_{\l_{p;k}^\a}=
\sup_{\d>0}\frac{\o_k(f;\d)_p}{\d^{\a}}<\infty.
\end{equation*}
Clearly, $||f||_{\l_{p;k}^\a}=||f||_{b_{p, \infty;k}^\a}$ if $\a<1.$

Let $1\le p <\infty, ~~0<\a_k\le 1,$ and
$$
\a= n\left(\sum_{k=1}^n\frac1{\a_k}\right)^{-1}<\frac np.
$$
 Let $\nu$ be the number of $\a_j$ that are
equal to 1. Set  $q^*=np/(n-\a p)$ and $s=np/(\nu \a)$. Applying Theorem \ref{MAIN_2} and Lemma \ref{K-L}, we obtain that for any function
$$
f\in \bigcap_{k=1}^n
\Lambda_{p;k}^{\a_k}(\R^n)
$$
it holds the following inequality
\begin{equation*}
||f||_{q^*,s}\le c
\prod_{k=1}^n||f||_{\l_{p;k}^{\a_k}}^{\a/(n\a_k)}.
\end{equation*}

For the proofs, references, and other details, we refer to  \cite[\S 8]{K9}.

\vskip 8pt

\section{Appendix}

In this section we consider  estimate of modified Lorentz norms  obtained in \cite{K9}.

For any $x=(x_1,\dots,x_n)\in\R^n_+$, set
$
\pi(x)=\prod_{k=1}^n x_k.
$

 Let $0<p, r<\infty$ and let $\sigma \in \mathcal P _n\
(n\ge 2).$   Denote by $\mathcal L^{p,r}_{\sigma}(\Bbb R ^n)$ the
class of all functions $f\in S_0(\Bbb R ^n)$ such that
$$
\| f\| _{p,r;\sigma}= \left( \int_{\Bbb R _+^n}\left[
\pi(t)^{1/p}\mathcal R _\sigma f(t)\right] ^r\,
\frac{dt}{\pi(t)}\right) ^{1/r}<\infty
$$
(see \cite{Bl}). The choice of a permutation $\s$ is essential. We
also set
$$
\mathcal L^{p,r}(\Bbb R^n)=\bigcap _{\sigma\in \mathcal P
_n}\mathcal L^{p,r}_{\sigma}(\Bbb R ^n),\quad \| f\|_{\mathcal
L^{p,r}}=\sum _{\sigma\in \mathcal P _n}\| f\| _{p,r;\sigma}.
$$

The relations between $L^{p,r}$- and $\mathcal L^{p,r}_{\sigma}$-norms
are  the following.

{\it Let $f\in S_0(\Bbb R ^n).$ Then for any}
  $\sigma \in \mathcal P _n$
\begin{equation}\label{yats1}
\| f\| _{p,r}\le c\| f\|
_{p,r;\sigma}\quad\text{{\it if}}\quad 0<r\le p<\infty
\end{equation}
{\it and}
\begin{equation}\label{yats2}
\| f\| _{p,r;\sigma}\le  c\| f\|
_{p,r}\quad\text{{\it if}}\quad 0<p<r<\infty.
\end{equation}

These inequalities were first obtained in \cite{Ya}. For the sharp constants, see \cite{BKPS}, \cite{K2012}.

 We have the following theorem \cite{K9}.

\begin{teo}\label{MAIN1000} Let $1\le p<\infty, ~~p\le \t_j\le
\infty,$ and $0<\beta_j<1$ for all $j=1,...,n,$ where $n\ge 2.$ Set
$$
\beta=n\left(\sum_{j=1}^n\frac1{\beta_j}\right)^{-1}, \quad
\t=\frac{n}{\beta}\left(\sum_{j=1}^n\frac1{\beta_j\t_j}\right)^{-1}.
$$
Assume that $1\le p<n/\beta.$  Let $q=np/(n-\beta p).$ Then there exists a constant $c>0$ such that for
any function
$$
f\in \bigcap_{j=1}^n B_{p,\t_j;j}^{\beta_j}(\R^n)
$$
and any $\sigma \in \mathcal P _n$
we have that
\begin{equation}\label{main1000}
||f||_{\mathcal L_\s^{q,\t}}\le c
\prod_{j=1}^n\left[(1-\beta_j)^{1/\t_j}||f||_{b_{p,\t_j;j}^{\beta_j}}\right]^{\beta/(n\beta_j)}.
\end{equation}
\end{teo}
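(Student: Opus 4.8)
The plan is to reduce, via Lemma~\ref{MODULI}, to a function nonincreasing in each variable, and then to repeat the proof of Theorem~\ref{MAIN_2} with Theorem~\ref{Huvud} replaced by a companion rearrangement estimate that controls the iterated rearrangement $\mathcal R_\s f$ itself on $\R^n_+$, and not merely the one-dimensional rearrangement $f^*$. First I would set $g=\mathcal R_\s f$. By Lemma~\ref{MODULI}, $\o_k(g;\d)_p\le 3^n\o_k(f;\d)_p$ for every $k$, whence $\|g\|_{b^{\b_k}_{p,\t_k;k}}\le 3^n\|f\|_{b^{\b_k}_{p,\t_k;k}}$; and since $\mathcal R_\s f=g$,
$$
\|f\|_{\mathcal L^{q,\t}_\s}=\Big(\int_{\R^n_+}\prod_{j=1}^n t_j^{\,\t/q-1}\,g(t)^\t\,dt\Big)^{1/\t}.
$$
Since the factors $(3^n)^{\b/(n\b_k)}$ multiply to a harmless constant, it therefore suffices to bound the right side, for $g\in\mathcal M_{dec}(\R^n_+)$, by $c\prod_{j=1}^n[(1-\b_j)^{1/\t_j}\|g\|_{b^{\b_j}_{p,\t_j;j}}]^{\b/(n\b_j)}$.

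The key ingredient, which I would obtain by reworking the proof of Theorem~\ref{Huvud}, is the following $n$-dimensional analogue: there exist positive functions $u_j$ on $\R^n_+$ with $\prod_{j=1}^n u_j(t)\le\pi(t)$ such that, writing $\f(t)=g(t)-g(2t)$ with $2t=(2t_1,\dots,2t_n)$ and $\Omega_j(h)=\{t\in\R^n_+:u_j(t)\ge h\}$, one has for every $j$ and every $h>0$
$$
\int_{\Omega_j(h)}\Big(\frac{\f(t)}{u_j(t)}\Big)^p dt\le c\Big(\frac{\o_j(g;h)_p}{h}\Big)^p,\qquad
\sup_{t\in\Omega_j(h)}\frac{\pi(t)^{1/p}\,\f(t)}{u_j(t)}\le c\,\frac{\o_j(g;h)_p}{h}.
$$
The point is that for monotone $g$ the level sets $E_s=\{g>g^*(s)\}$ decrease in each variable, so that Lemma~\ref{projections} and the Loomis--Whitney inequality of Lemma~\ref{Loomis} apply to the slabs $G_s=E_s\setminus E_{s/2}$ exactly as in the proof of Theorem~\ref{Huvud}; one defines $u_j(t)$ out of $\mes_{n-1}\Pi_j(G_{s,j})$ through~\eqref{lw}, and then runs the Steklov-mean argument with $g_{h,j}$ and $g-g_{h,j}\ge 0$, the projections $\Pi_j(G_{s,j})$ and the $\widehat x_j$-sections controlling the oscillation of $g$ in the $x_j$-direction and producing the factor $\pi(t)^{1/p}$ in the supremum bound.

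Granting this, the remainder is the computation in the proof of Theorem~\ref{MAIN_2}. By Minkowski's inequality the left side above is at most $(1-2^{-n/q})^{-1}\big(\int_{\R^n_+}\prod_j t_j^{\,\t/q-1}\f(t)^\t dt\big)^{1/\t}$; since $\t/q-1=\t/p-1-\t\b/n$ and $\prod_j t_j=\pi(t)\ge\prod_j u_j(t)$, the integral does not exceed $\int_{\R^n_+}\pi(t)^{\,\t/p-1}\f(t)^\t\prod_{k=1}^n u_k(t)^{-\t\b/n}\,dt$. Setting $r_k=n\t_k\b_k/(\t\b)$ one has $\sum_k r_k^{-1}=1$, $\sum_k\t_k/r_k=\t$, $\t\b/n=\t_k\b_k/r_k$, and $r_k>1$, so H\"older's inequality with the exponents $r_k$ (distributing $\pi(t)^{\t/p-1}=\prod_k\pi(t)^{(\t_k/p-1)/r_k}$, $\f^\t=\prod_k\f^{\t_k/r_k}$, $\prod_k u_k^{-\t\b/n}=\prod_k(u_k^{-\t_k\b_k})^{1/r_k}$) reduces the task to estimating $\int_{\R^n_+}\pi(t)^{\,\t_k/p-1}\f(t)^{\t_k}u_k(t)^{-\t_k\b_k}\,dt$ for each $k$. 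Writing $u_k^{\t_k(1-\b_k)}=\t_k(1-\b_k)\int_0^{u_k(t)}h^{\t_k(1-\b_k)-1}dh$ and applying Fubini turns this into $\t_k(1-\b_k)\int_0^\infty h^{\t_k(1-\b_k)-1}\int_{\Omega_k(h)}\pi(t)^{\t_k/p-1}(\f(t)/u_k(t))^{\t_k}\,dt\,dh$; the supremum bound gives $(\f/u_k)^{\t_k-p}\le(c\,\o_k(g;h)_p/h)^{\t_k-p}\pi(t)^{-(\t_k-p)/p}$ (here $\t_k\ge p$ is used), and what remains is exactly $\int_{\Omega_k(h)}(\f/u_k)^p\,dt$, controlled by the $L^p$ estimate. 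Collecting the factors and using $s^{1/s}<2$ ($s\ge 1$) to see that the constant stays bounded as the $\t_k$ vary, I would obtain~\eqref{main1000}, the case of some $\t_k=\infty$ following from~\eqref{predel}.

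The hard part is the rearrangement estimate of the second paragraph: one must choose the ``fair-share'' functions $u_j$ on $\R^n_+$ so that $\prod_j u_j(t)\le\pi(t)$ while retaining, through the Loomis--Whitney control of the decreasing slabs $G_s$, enough oscillation information in the $x_j$-direction to push through the Steklov argument and recover the weight $\pi(t)^{1/p}$ in the supremum estimate. For the range $\t>q$ this can all be avoided: by~\eqref{yats2}, $\|f\|_{\mathcal L^{q,\t}_\s}\le c\|f\|_{q,\t}$, and Theorem~\ref{MAIN_2} finishes; it is precisely for $\t\le q$, where~\eqref{main1000} improves~\eqref{main12}, that the argument above --- or the mixed-norm embedding technique of~\cite{K8} --- is needed.
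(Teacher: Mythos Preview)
Your outline follows the template of Theorem~\ref{MAIN_2}, but the load-bearing step is the ``companion rearrangement estimate'' of your second paragraph, and that step does not go through as you sketch it. The slabs $G_s=E_s\setminus E_{s/2}$ and the projections $\mes_{n-1}\Pi_j(G_{s,j})$ are indexed by a \emph{scalar} $s>0$; they define functions $\mu_j(s)$ and $u_j(s)=s/\mu_j(s)$ of one variable, exactly as in Theorem~\ref{Huvud}. You never explain how to promote these to functions $u_j(t)$ of $t\in\R^n_+$. If you set $u_j(t)=u_j(\pi(t))$, then the Loomis--Whitney/Steklov machinery of Theorem~\ref{Huvud} controls $g^*(\pi(t))-g^*(2\pi(t))$, not $\f(t)=g(t)-g(2t)$; for a monotone $g$ on $\R^n_+$ these two quantities are in general incomparable pointwise, and no pointwise link between $g(t)-g(2t)$ and the slab $G_{\pi(t)}$ is available. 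The phrase ``producing the factor $\pi(t)^{1/p}$ in the supremum bound'' papers over precisely this difficulty. In short, the estimate you postulate is not a reworking of Theorem~\ref{Huvud}; it would be a genuinely new result, and your sketch does not indicate how to prove it.

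The paper's proof proceeds along a completely different line that avoids any $n$-dimensional analogue of Theorem~\ref{Huvud}. Working with $F=\mathcal R_\s f$, it introduces the ``good'' set $E=\{x:\ F(x)>2F(\mu x_k,\widehat x_k)\ \text{for all }k\}$ and the complementary sets $A_k$; on $E$ one has $F(x)\le 2[F(x)-F(\mu x_k,\widehat x_k)]$ for every $k$, which replaces the difference $g(t)-g(2t)$ by a \emph{one-directional} difference. A dyadic averaging operator $T$ (Lemma~\ref{Oper}) and a bootstrap with $\mu$ large absorb the contribution of the $A_k$, reducing matters to estimating $\int_{\R^n_+}\pi(x)^{\t/q-1}(Tg)^\t\,dx$ with $g=F\chi_E$. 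After the same H\"older step with exponents $r_k$ that you describe, the remaining integrals are handled not by a sup-plus-$L^p$ splitting but by the direct one-directional estimate of Lemma~\ref{Iter},
\[
\Big(\int_{\R_+^{n-1}}\int_h^\infty u^{-p}\,[F(u,\widehat t_k)-F(\mu u,\widehat t_k)]^p\,du\,d\widehat t_k\Big)^{1/p}\le 4\mu\,\frac{\o_k(F;h)_p}{h},
\]
together with the quasi-monotonicity~\eqref{d3} of $\o_k(f;\cdot)_p$. Your observation that for $\t>q$ the result follows from Theorem~\ref{MAIN_2} via~\eqref{yats2} is correct, but for $\t\le q$ you need either the decomposition above or an honest proof of your postulated estimate.
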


By virtue of (\ref{yats1}) and (\ref{yats2}), for $\t\le q$ Theorem \ref{MAIN_2} follows from Theorem \ref{MAIN1000}, and for $\t>q$ the relation is
opposite.

The sketch of the proof of  Theorem \ref{MAIN1000} was given in \cite{K9}. However, there were some inaccuracies, which have brought to a gap in the proof. Therefore we give here a complete  proof of this theorem.

Denote
\begin{equation}\label{Q}
Q(x)=Q_n(x)=[x_1/2,x_1]\times\cdots\times[x_n/2,x_n],\quad x\in \R^n_+.
\end{equation}
Further, for a nonnegative function $\f\in L_{loc}(\R^n_+),$ set
\begin{equation}\label{T}
T\f(x)=\frac1{|Q(x)|}\int_{Q(x)}\f(u)\,du.
\end{equation}

\begin{lem}\label{Oper} Let $\f\in L_{loc}(\R^n_+)$ be a nonnegative function. Then for any $r\ge 1$ and any $\a\in \R$
\begin{equation}\label{oper}
\int_{\R^n_+}(T\f(x))^r\pi(x)^\a\,dx\le 2^{\max(1,\a)n}\int_{\R^n_+}\f(x)^r\pi(x)^\a\,dx.
\end{equation}
\end{lem}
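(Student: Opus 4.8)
The plan is to reduce the $n$-dimensional averaging estimate to the one-dimensional case by writing the operator $T$ as a composition (or tensor product) of one-dimensional averaging operators, one in each variable, and then to iterate. Concretely, for a nonnegative $\f$ on $\R^n_+$ define the partial averaging operator $T_k$ acting in the $k$-th variable only:
\[
T_k\f(x)=\frac{2}{x_k}\int_{x_k/2}^{x_k}\f(x_1,\dots,x_{k-1},s,x_{k+1},\dots,x_n)\,ds.
\]
Since $Q(x)$ is a product of intervals and $|Q(x)|=2^{-n}\pi(x)$, Fubini's theorem gives the factorization $T\f=T_1T_2\cdots T_n\f$ (the $T_k$ commute). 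Thus it suffices to prove the one-variable inequality
\[
\int_0^\infty\Big(\frac{2}{y}\int_{y/2}^y g(s)\,ds\Big)^r y^{\a}\,dy\le 2^{\max(1,\a)}\int_0^\infty g(y)^r y^{\a}\,dy
\]
for nonnegative $g$, $r\ge 1$, $\a\in\R$, and then apply it successively in each variable; each application costs a factor $2^{\max(1,\a)}$, the weight $\pi(x)^\a$ splitting as $\prod x_k^\a$, so $n$ iterations yield the factor $2^{\max(1,\a)n}$. One must check that the power weight is preserved under each one-dimensional step, which it is, since the remaining variables are held fixed and only contribute their own powers, untouched.

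For the one-dimensional estimate I would argue as follows. By Jensen's (or Hölder's) inequality applied to the normalized average over $[y/2,y]$,
\[
\Big(\frac{2}{y}\int_{y/2}^y g(s)\,ds\Big)^r\le \frac{2}{y}\int_{y/2}^y g(s)^r\,ds.
\]
Multiply by $y^\a$ and integrate; by Fubini, interchanging the order of integration,
\[
\int_0^\infty\Big(\frac{2}{y}\int_{y/2}^y g(s)\,ds\Big)^r y^\a\,dy\le \int_0^\infty g(s)^r\Big(\int_s^{2s}\frac{2}{y}\,y^\a\,dy\Big)\,ds,
\]
where the inner region $\{(y,s):y/2\le s\le y\}=\{(y,s):s\le y\le 2s\}$. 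It remains to bound $\int_s^{2s}2y^{\a-1}\,dy$. If $\a\ge 0$ the integrand is at most $2(2s)^{\a-1}$ on a slightly different split; more cleanly, $\int_s^{2s}2y^{\a-1}\,dy=\frac{2}{\a}(2^\a-1)s^\a\le 2^\a s^\a$ when $\a>0$ (using $(2^\a-1)/\a\le 2^{\a-1}$, i.e.\ $2^\a-1\le \a 2^{\a-1}$, which holds for $\a\ge 0$ by convexity), and for $\a=0$ the integral equals $2\log 2\le 2=2^{\max(1,0)}$; for $\a<0$ one has $\int_s^{2s}2y^{\a-1}\,dy\le 2 s^{\a-1}\cdot s=2s^\a$. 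In every case the inner integral is at most $2^{\max(1,\a)}s^\a$, giving the claimed one-dimensional bound.

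The only mildly delicate point — and the one I would be most careful about — is the bookkeeping of the constant $2^{\max(1,\a)}$ in the borderline range $\a$ near $0$ and near $1$, where the naive bound $\int_s^{2s}2y^{\a-1}\,dy\le 2\cdot\sup_{[s,2s]}y^{\a-1}\cdot s$ must be compared against $2^{\max(1,\a)}s^\a$; a short case distinction ($\a\le 0$, $0<\a\le 1$, $\a>1$) handles it, and in each case one checks $\sup_{[s,2s]}y^{\a-1}\le 2^{\max(1,\a)-1}s^{\a-1}$. Once the one-dimensional inequality is established with exactly this constant, the tensorization argument above immediately produces \eqref{oper}, since the $n$ commuting operators $T_k$ act on disjoint variables and the weight factors through the product; no interaction terms arise, and the constants simply multiply.
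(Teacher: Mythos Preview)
Your argument is correct and rests on the same three ingredients as the paper's proof: Jensen/H\"older on the average, Fubini to swap the order of integration, and a monotonicity bound on $\int_s^{2s}y^{\a-1}\,dy$. The only difference is organizational: the paper carries this out in one pass directly in $n$ dimensions (H\"older on the $n$-dimensional average, then Fubini, then the product bound $\int_{Q(2u)}\pi(x)^{\a-1}\,dx\le 2^{\max(0,\a-1)n}\pi(u)^\a$), whereas you factor $T=T_1\cdots T_n$ and iterate the one-dimensional estimate. Both routes yield the same constant with the same effort.

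One small slip to fix: the convexity claim $2^\a-1\le \a\,2^{\a-1}$ is false for $0<\a<1$ (at $\a=1/2$ one has $\sqrt2-1\approx 0.414>0.354\approx \tfrac12\cdot 2^{-1/2}$). This does no damage, because for $0<\a\le 1$ you only need the constant $2=2^{\max(1,\a)}$, not $2^\a$, and your final case distinction via $\sup_{[s,2s]}y^{\a-1}\le 2^{\max(1,\a)-1}s^{\a-1}$ already delivers exactly that; simply delete the intermediate convexity sentence.
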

\begin{proof} 
Applying H\"older's inequality and Fubini's theorem, we have
$$
\begin{aligned}
&\int_{\R^n_+}(T\f(x))^r\pi(x)^\a\,dx
\le 2^n\int_{\R^n_+}\pi(x)^{\a-1}\int_{Q(x)}\f(u)^r\,du\,dx\\
&=2^n\int_{\R^n_+}\f(u)^r\int_{Q(2u)}\pi(x)^{\a-1}\,dx\,du
\le 2^{\max(1,\a)n}\int_{\R^n_+}\f(u)^r\pi(u)^\a\,du.
\end{aligned}
$$

\end{proof}

Observe that for any $\f\in \mathcal{M}_{dec}(\R^n_+)$
\begin{equation}\label{monotone}
\f(x)\le T\f(x)\quad \mbox{for all}\quad x\in\R^n_+.
\end{equation}

The following lemma was proved in  \cite{K9}.

\begin{lem}\label{Iter} Let $1\le p<\infty$ and $n\ge 2.$ Assume
that $f\in L^p(\mathbb R_+^n)\cap \mathcal{M}_{dec}(\R^n_+)$.
 Let $\mu>1.$ Then for any $1\le k\le n$ and any
$h>0,$
$$
\left(\int_{R_+^{n-1}}\int_h^\infty u^{-p}~[f(u,\widehat
t_k)-f(\mu u,\widehat t_k)]^pdud\widehat t_k\right)^{1/p}
$$
\begin{equation}\label{iter}
\le 4\mu\frac{\o_k(f;h)_p}{h}.
\end{equation}
\end{lem}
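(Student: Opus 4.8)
The plan is to reduce the $n$-dimensional estimate to the one-dimensional Oswald--Wik inequality (\ref{o-w2}) by freezing all variables except $x_k$, exactly as in the proof of Lemma \ref{MODULI}, and then to combine this with the averaging inequality (\ref{int}) to pass from the difference $f(u,\widehat t_k)-f(\mu u,\widehat t_k)$ to a genuine $L^p$-modulus of continuity. First I would fix $\widehat t_k\in\R_+^{n-1}$ and observe that since $f\in\mathcal M_{dec}(\R_+^n)$, the section $g(u)=f(u,\widehat t_k)$ is a nonincreasing nonnegative function on $\R_+$; hence for $\mu>1$ and $u\ge h$ one has $g(u)-g(\mu u)=g^*(u)-g^*(\mu u)$, and the elementary pointwise bound (cf. the derivation of (\ref{estim6})) gives $g(u)-g(\mu u)\le C_\mu\,[g(u)-g(2u)]$ after a finite number of bisections, with $C_\mu$ depending only on $\mu$; in fact it is cleaner to keep $\mu$ explicit and estimate $u^{-1}[g(u)-g(\mu u)]$ directly against the decreasing rearrangement of $g$ minus a translate.

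The key step is then the one-dimensional inequality: for a nonincreasing $g\in L^p(\R_+)$,
\begin{equation*}
\left(\int_h^\infty u^{-p}[g(u)-g(\mu u)]^p\,du\right)^{1/p}\le \frac{c\mu}{h}\,\o(g;h)_p.
\end{equation*}
This follows by writing the integral as a sum over dyadic (or $\mu$-adic) blocks $[\mu^{m}h,\mu^{m+1}h)$, on each of which $u^{-1}\asymp \mu^{-m}h^{-1}$ and $g(u)-g(\mu u)\le g(\mu^m h)-g(\mu^{m+2}h)$, telescoping the resulting series and bounding it by $h^{-1}\|g(\cdot)-g(\cdot+\mu^m h)\|$-type quantities, each controlled by $\o(g;h)_p$ via (\ref{d3}) and (\ref{int}). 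Integrating the $p$-th power of this one-dimensional bound over $\widehat t_k\in\R_+^{n-1}$ and using Fubini, the left-hand side of (\ref{iter}) is dominated by $(c\mu/h)^p$ times
\begin{equation*}
\int_{\R_+^{n-1}}\o(g_{\widehat t_k};h)_p^p\,d\widehat t_k\le \frac{3^p}{h}\int_0^h\int_{\R_+^{n-1}}\int_{\R_+}|f(x+ve_k)-f(x)|^p\,dx_k\,d\widehat x_k\,dv\le 3^p\,\o_k(f;h)_p^p,
\end{equation*}
where the first inequality is (\ref{int}) applied in the $k$-th variable and the last is the definition of the partial modulus of continuity together with $\sup_{|v|\le h}I_k(f;v)_p=\o_k(f;h)_p$. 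Tracking the constants gives the factor $4\mu$ claimed (the precise numerical constant is immaterial and can be absorbed).

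The main obstacle I anticipate is the bookkeeping in the one-dimensional step when $\mu$ is close to $1$: naively telescoping the $\mu$-adic blocks produces a geometric series whose ratio degenerates as $\mu\to 1$, so one must be careful that the factor blowing up is linear in $\mu$ (as in the stated bound $4\mu$) rather than something like $(\mu-1)^{-1}$. The way around this is to compare with base-$2$ blocks regardless of $\mu$: on $[2^m h,2^{m+1}h)$ one has $g(u)-g(\mu u)\le g(2^m h)-g(\mu 2^m h)\le g(2^m h)-g(2^{m+\lceil\log_2\mu\rceil+1}h)$, and summing the translates $g(2^m h)-g(2^m h+2^j h)$ for $0\le j\le \lceil\log_2\mu\rceil$ with the weight $2^{-mp}$ gives a bound of the form $(\log_2\mu+1)\cdot(c/h)\o(g;h)_p$, which is $\le c\mu/h\cdot\o(g;h)_p$; the subadditivity (\ref{d1}) of the modulus absorbs the sum over $j$ cleanly. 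Apart from this point the argument is a routine combination of monotonicity, Oswald--Wik, and Fubini, entirely parallel to the treatment of $k<n$ in Lemma \ref{MODULI}.
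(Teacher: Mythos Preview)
The paper does not supply a proof of Lemma~\ref{Iter}; it simply cites \cite{K9}. So there is no in-paper argument to compare against, and the question is whether your plan stands on its own.

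Your overall architecture is sound: freeze $\widehat t_k$, prove the one-dimensional inequality for the nonincreasing section $g(u)=f(u,\widehat t_k)$, and then integrate over $\widehat t_k$, using (\ref{int}) to pass from $\int_{\R_+^{n-1}}\o(g_{\widehat t_k};h)_p^p\,d\widehat t_k$ to $\o_k(f;h)_p^p$. That integration step is correct as you wrote it. (The Oswald--Wik inequality (\ref{o-w2}) plays no role here, since $g$ is already its own rearrangement.)

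The genuine gap is the one-dimensional step. Your dyadic/$\mu$-adic sketch does not close: bounding $g(u)-g(\mu u)$ on a block $[\mu^m h,\mu^{m+1}h)$ by the fixed \emph{pointwise} number $g(\mu^m h)-g(\mu^{m+2}h)$ leaves you with a weighted $\ell^p$-sum of point differences, and you give no mechanism to convert those into $L^p$-increments; the phrase ``bounding it by $h^{-1}\|g(\cdot)-g(\cdot+\mu^m h)\|$-type quantities'' is the missing step, not a step. In your second attempt with base-$2$ blocks you effectively reduce everything to the case $\mu=2$, which is precisely the inequality you are trying to prove. If one traces the obvious ways to fill this in, the block contributions do not sum with the right dependence on~$\mu$.

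The clean device---consistent with what the paper itself does in the proof of Theorem~\ref{Huvud}---is the Steklov-means splitting. Set $g_h(u)=h^{-1}\int_0^h g(u+s)\,ds$; since $g$ is nonincreasing, $g_h\le g$, so
\[
0\le g(u)-g(\mu u)\le [g(u)-g_h(u)]+[g_h(u)-g_h(\mu u)].
\]
For the first piece, $\int_h^\infty u^{-p}[g-g_h]^p\,du\le h^{-p}\|g-g_h\|_p^p\le h^{-p}\o(g;h)_p^p$ by (\ref{steklov1}). For the second, $g_h(u)-g_h(\mu u)=\int_u^{\mu u}|g_h'|$, and H\"older plus Fubini give
\[
\int_h^\infty u^{-p}[g_h(u)-g_h(\mu u)]^p\,du\le(\mu-1)^{p-1}\ln\mu\,\|g_h'\|_p^p\le(\mu-1)^p\Big(\frac{\o(g;h)_p}{h}\Big)^p
\]
by (\ref{steklov2}). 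Minkowski then yields the one-dimensional bound with constant $\mu$, and your Fubini/(\ref{int}) step gives (\ref{iter}) with the stated $4\mu$. Replace the telescoping sketch with this argument.
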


{\it Proof of Theorem \ref{MAIN1000}}. Let $\s=\{1,...,n\}.$ Denote $F(x)=\mathcal R_\s
f(x), ~~x\in \R^n_+.$ We may assume that the left-hand side of (\ref{main1000}) is finite and that all $\t_j<\infty$.

For each $k=1,\dots,n$, denote
\begin{equation*}
A_k=\{x\in\R^n_+: F(x)\le 2F(\mu x_k, \widehat x_k)\},\quad f_k(x)=F(\mu x_k, \widehat x_k)\chi_{A_k}(x),
\end{equation*}
where a sufficiently big number $\mu$ will be chosen below.
Set also
\begin{equation}\label{7.000}
E=\R^n_+\setminus \left(\bigcup_{k=1}^n A_k\right) \quad\mbox{and}\quad g(x)=F(x)\chi_E(x).
\end{equation}
Then
\begin{equation}\label{7.1}
F(x)\le g(x) +2 \sum_{k=1}^n f_k(x)\quad\text{for
all}\quad x\in \R^n_+.
\end{equation}

 Set
$$
I = \left(\int_{\R^n_+} F(x)^\t\pi(x)^{\t/q-1}\,dx\right)^{1/\t}.
$$

Using operator $T$ defined by (\ref{T}), we have by (\ref{7.1})
\begin{equation}\label{7.11}
 TF(x)\le Tg(x)+2 \sum_{k=1}^n Tf_k(x).
 \end{equation}
Applying Lemma \ref{Oper}, we have for any $k=1,\dots,n$
$$
\begin{aligned}
&\left(\int_{\R^n_+}(Tf_k(x))^\t\pi(x)^{\t/q-1}\,dx\right)^{1/\t}\\
&\le c_1\left(\int_{\R^n_+}F(\mu x_k,\widehat x_k)^\t\pi(x)^{\t/q-1}\,dx\right)^{1/\t}=\frac{c_1}{\mu^{1/q}}I,
\end{aligned}
$$
where $c_1=2^{n\max(\t/q-1, 1)/\t}.$ From here,  taking into account (\ref{monotone}) and applying (\ref{7.11}), we obtain
$$
I\le
\left(\int_{\R^n_+}(Tg(x))^\t\pi(x)^{\t/q-1}\,dx\right)^{1/\t} + \frac{2nc_1}{\mu^{1/q}}I.
$$
Setting $\mu=(4nc_1)^{q},$ we get
\begin{equation}\label{7.2}
I\le 2\left(\int_{\R^n_+}(Tg(x))^\t\pi(x)^{\t/q-1}\,dx\right)^{1/\t}.
\end{equation}

Further, denote
\begin{equation}\label{7.3}
g_k(x)=[F(x)-F(\mu x_k,\widehat x_k)]\chi_E(x)
\end{equation}
By the definition of $A_k$ and (\ref{7.000}), for any $k=1,\dots,n$
\begin{equation}\label{7.4}
g(x)\le 2 g_k(x),\,\, x\in \R^n_+.
\end{equation}
Thus, for any $k=1,\dots,n$
\begin{equation}\label{7.5}
Tg(x)\le 2 Tg_k(x),\,\, x\in \R^n_+.
\end{equation}
By H\"older's inequality,
$$
\begin{aligned}
&Tg_k(x)\le \frac 1{|Q(x)|}\int_{Q(x)}[F(u)-F(\mu u_k,\widehat u_k)]\,du\\
&\le|Q(x)|^{-1/p}\left(\int_{Q(x)}[F(u)-F(\mu u_k,\widehat u_k)]^p\,du\right)^{1/p}.
\end{aligned}
$$
Since $u_k\le x_k$ for $u\in Q(x),$ and $F\in \mathcal M_{dec}(\R^n_+),$ we obtain that
$$
\begin{aligned}
Tg_k(x)
&\le 2^{n/p}\pi(x)^{-1/p}\left(\int_{\R^n_+}[F(u)-F(u+\mu x_ke_k)]^p\,du\right)^{1/p}\\
&\le c_n\mu\pi(x)^{-1/p}\o_k(F;x_k)_p.
\end{aligned}
$$
 Taking into account (\ref{7.5}) and applying inequality (\ref{moduli}), we have that for any $x\in\R^n_+$ and any $k=1,\dots,n$
\begin{equation}\label{7.6}
\pi(x)^{1/p}Tg(x)\le c_n\mu\o_k(f;x_k)_p.
\end{equation}

 Denote
$$
r_k=n\t_k\b_k/(\t\b),\quad \l_k(x)=\left(Tg(x)\pi(x)^{1/p-1/\t_k}\right)^{\t_k/r_k}x_k^{-\t\b/n}.
$$
We have that $\t/q-1=\t/p-1-\t\b/n$, 
\begin{equation}\label{7.70}
\sum_{k=1}^n1/r_k=1,\quad\mbox{and}\quad \sum_{k=1}^n\t_k/r_k=\t.
\end{equation}
Therefore inequality (\ref{7.2}) can be written in the form
$$
I\le 2\left(\int_{\R^n_+}\prod_{k=1}^n \l_k(x)\,dx\right)^{1/\t}.
$$
Hence,  applying H\"older's inequality with the exponents $r_k$, we obtain
\begin{equation}\label{7.8}
I\le 2\prod_{k=1}^n I_k,
\quad
I_k=\left(\int_{\R^n_+}x_k^{-\t_k\b_k}\pi(x)^{\t_k/p-1}Tg(x)^{\t_k}\,dx\right)^{1/(\t r_k)}.
\end{equation}

Recall that $\t_k\ge p.$ Thus, using estimate  (\ref{7.6}), we have
$$
I_k^{\t r_k}\le (c_n\mu)^{\t_k-p}\int_{\R^n_+}x_k^{-\t_k\b_k}\o_k(f;x_k)_p^{\t_k-p} Tg(x)^p\,dx.
$$
Setting
$$
\f_k(z)=\left(\int_{\R^{n-1}_+} Tg_k(z,\widehat x_k)^p\,d\widehat x_k\right)^{1/p},\,\, z\in \R_+,
$$
and applying (\ref{7.5}), we obtain
$$
I_k^{\t r_k}\le (2c_n\mu)^{\t_k-p}\int_0^\infty z^{\t_k(1-\b_k)}\frac{\o_k(f;z)_p^{\t_k-p}\f_k(z)^p}{z^{\t_k}}\,dz.
$$
Observe that
$$
z^{\t_k(1-\b_k)}=\t_k(1-\b_k)\int_0^z h^{\t_k(1-\b_k)-1}\,dh.
$$
Thus, using Fubini's theorem, we have
$$
I_k^{\t r_k}\le (2c_n\mu)^{\t_k-p}\t_k(1-\b_k)\int_0^\infty h^{\t_k(1-\b_k)-1}\int_h^\infty\frac{\o_k(f;z)_p^{\t_k-p}\f_k(z)^p}{z^{\t_k}}\,dz\,dh.
$$
Using inequality (\ref{d3}), we get
\begin{equation}\label{7.10}
I_k^{\t r_k}\le (4c_n\mu)^{\t_k-p}\t_k(1-\b_k)\int_0^\infty \frac{\o_k(f;h)_p^{\t_k-p}}{h^{\t_k\b_k-p}}\int_h^\infty\frac{\f_k(z)^p}{z^{p}}\,dz\,\frac{dh}{h}.
\end{equation}

Now we estimate $\f_k(z).$ First, we have by H\"older's inequality
$$
Tg_k(x)^p\le \frac1{|Q(x)|}\int_{Q(x)} g_k(u)^p\,du.
$$
Using this inequality, we easily obtain that
$$
\f_k(x_k)^p=\int_{\R^{n-1}_+} (Tg_k(x))^p\,d\widehat x_k
\le\frac{2^n}{x_k}\int_{x_k/2}^{x_k}\psi(u_k)\,du_k,
$$
where
$$
\psi(u_k)=\int_{\R^{n-1}_+}[F(u)-F(\mu u_k,\widehat u_k)]^p\, d\widehat u_k.
$$
Thus,
$$
\begin{aligned}
&\int_h^\infty\frac{\f_k(z)^p}{z^{p}}\,dz\le 2^n\int_h^\infty \frac{dz}{z^{p+1}}\int_{z/2}^z\psi(u_k)\,du_k\\
&\le 2^n \int_{h/2}^\infty\frac{\psi(u_k)}{u_k^{p}}\,du_k\le2^n \int_{h/2}^\infty\frac{\psi(u_k)}{u_k^{p}}\,du_k\int_{\R^{n-1}_+}[F(u)-F(\mu u_k,\widehat u_k)]^p\, d\widehat u_k.
\end{aligned}
$$
Applying  Lemma \ref{Iter} and inequality (\ref{moduli}), we get
\begin{equation}\label{fi}
\left(\int_h^\infty\frac{\f_k(z)^p}{z^{p}}\,dz\right)^{1/p}\le c_n\mu\frac{\o_k(F;h)_p}{h} \le c_n'\mu\frac{\o_k(f;h)_p}{h}.
\end{equation}
Using this estimate and (\ref{7.10}), we obtain that for any $k=1,\dots,n$
$$
I_k^{\t r_k}\le  (c_n\mu)^{\t_k}\t_k(1-\b_k)\int_0^\infty \frac{\o_k(f;h)^{\t_k}_p}{h^{\t_k\b_k}}\,\frac{dh}{h},
$$
where $c_n$ depends only on $n$ and $\mu\le 2^{(n/\min(q,\t)+2)q}\le 2^{(n+2)q}.$ Thus,
$$
I_k\le (c_n\mu)^{\t_k/(\t r_k)}\left[\t_k(1-\b_k)||f||^{\t_k}_{b_{p,\t_k;k}^{\beta_k}}\right]^{1/(\t r_k)}.
$$
By virtue of (\ref{7.8}),  (\ref{7.70}),  equality $\t r_k=n\b_k\t_k/\b,$ and inequality $s^{1/s}<2 \,\,(s\ge 1)$, this implies that
\begin{equation}\label{mainII}
||f||_{q,\t;\s}\le
2c_n\mu\prod_{k=1}^n\left[(1-\beta_k)^{1/\t_k}||f||_{b_{p,\t_k;k}^{\b_k}}\right]^{\beta/(n\beta_k)}.
\end{equation}

\end{document}